\newtheoremstyle{theorem}
{5pt} 
{5pt} 
{\sl} 
{\parindent} 
{\bf} 
{. } 
{ } 
{} 
\theoremstyle{theorem}
\newtheorem{theorem}{Theorem}
\theoremstyle{theorem}
\newtheorem{corollary}[theorem]{Corollary}
\newtheorem{proposition}[theorem]{Proposition}
\newtheorem{lemma}[theorem]{Lemma}
\newtheoremstyle{defi}
{5pt} 
{5pt} 
{\rm} 
{\parindent} 
{\bf} 
{. } 
{ } 
{} 
\theoremstyle{defi}
\newtheorem{definition}[theorem]{Definition}
\theoremstyle{defi}
\newtheorem{remark}[theorem]{Remark}
\begin{document}
\centerline{\hfill}
\centerline{\hfill}
\centerline{\hfill}
\begin{center}
\bf
SOME PROPERTIES  OF THE RESOLVENT KERNELS \\ FOR CONTINUOUS BI-CARLEMAN
KERNELS
\end{center}
\vskip.5cm
\centerline{Igor M. Novitskii}
\begin{center}
Khabarovsk Division\\
Institute of Applied Mathematics \\
Far-Eastern Branch of the Russian Academy of Sciences \\
54, Dzerzhinskiy Street, Khabarovsk 680 000, RUSSIA\\
e-mail: novim@iam.khv.ru
\end{center}
\noindent{\bf Abstract:} 
We prove that, at regular values lying in a strong convergence region, the
resolvent kernels for a continuous bi-Carleman kernel vanishing at infinity
can be expressed as uniform limits of sequences of resolvent kernels for its
approximating subkernels of Hilbert-Schmidt type.
\vskip\baselineskip
\noindent {\bf AMS Subject Classification:} 45P05, 45A05
\par
\noindent{\bf Key Words:} linear integral equations of the second kind,
                          bounded integral linear operator,
                          Fredholm resolvent, 
                          resolvent kernel,
                          bi-Carleman kernel,
                          Hilbert-Schmidt kernel,
                          regular value,
                          characteristic set
\section{Introduction}

In the general theory of integral equations of the second kind in $L^2(\mathbb{R})$, that
is, equations of the form
\begin{equation}\label{skequ}
f(s)-\lambda\int_{\mathbb{R}}\boldsymbol{T}(s,t)f(t)\,dt =g(s)\quad
\text{for almost every $s\in\mathbb{R}$},
\end{equation}
it is customary to call an integral kernel $\boldsymbol{T}_{\mid\lambda}$
a \textit{resolvent kernel for $\boldsymbol{T}$ at $\lambda$} if the integral
operator it induces on $L^2(\mathbb{R})$ is the Fredholm resolvent
$T(I-\lambda T)^{-1}$ of the integral operator $T$ on $L^2(\mathbb{R})$, whose
kernel is $\boldsymbol{T}$. Once the resolvent kernel $\boldsymbol{T}_{\mid\lambda}$
has been constructed, one can express the $L^2(\mathbb{R})$ solution $f$ to
equation \eqref{skequ} in a direct and simple fashion as
\begin{equation*}\label{solution}
            f(s)=g(s)+\lambda\int_{\mathbb{R}}\boldsymbol{T}_{\mid\lambda}(s,t)g(t)\,dt
            \quad\text{for almost every $s\in\mathbb{R}$},
\end{equation*}
regardless of the particular choice of the function $g$ of $L^2(\mathbb{R})$.
Here it should be noted that, in general, the property of being an integral
operator is not shared by Fredholm resolvents of integral operators, and there
is even an example, given in \cite{Kor:nonint1} (see also \cite[Section~5, Theorem~8]{Kor:alg}),
of an integral operator whose Fredholm resolvent at any non-zero regular value
is not an integral operator. This phenomenon, however, can never occur for
Carleman operators due to the fact that the right-multiple by a bounded
operator of a Carleman operator is again a Carleman operator. Therefore, in
the case when the kernel $\boldsymbol{T}$ is Carleman and $\lambda$ is a
regular value for $T$, the problem of solving equation \eqref{skequ} may be
reduced to the problem of explicitly constructing in terms of $\boldsymbol{T}$
the resolvent kernel $\boldsymbol{T}_{\mid\lambda}$ which is a priori known
to exist. For a precise formulation of this latter problem and for comments
to the solution of some of its special cases we refer to the works by Korotkov
\cite{Kor:problems}, \cite{Kor:alg} (in both the references, see Problem~4 in
\S 5). Here we only notice that in the case when a measurable kernel
$\boldsymbol{T}$ of \eqref{skequ} is bi-Carleman but otherwise unrestricted,
there seems to be as yet no analytic machinery for explicitly constructing its
resolvent kernel $\boldsymbol{T}_{\mid\lambda}$ at every regular value
$\lambda$. In order to approach this problem, which motivates the present work, we
confine our investigation to the case in which the kernel
$\boldsymbol{T}:\mathbb{R}^2\to\mathbb{C}$ of \eqref{skequ} and its two
Carleman functions $\boldsymbol{t}(s)=\overline{\boldsymbol{T}(s,\cdot)}$,
$\boldsymbol{t}^{\boldsymbol{\prime}}(s)
=\boldsymbol{T}(\cdot,s):\mathbb{R}\to L^2(\mathbb{R})$
are continuous and vanish at infinity.
These conditions can always be achieved by means of a unitary equivalence
transformation (see Proposition~\ref{uneqv} below), and this is, therefore,
not a serious loss of generality when working in the class of such kernels
(called $K^0$-kernels). One of the main technical advantages of dealing with
a $K^0$-kernel is that its subkernels, such as the restrictions of it to
compact squares in $\mathbb{R}^2$ centered at origin, are quite amenable to
the methods of the classical theory of ordinary integral equations, and can be
used to approximate the original kernel in suitable norms.
This, for instance, can be used directly to establish an explicit theory
of spectral functions for any Hermitian $K^0$-kernel by a development
essentially the same as the one given by T.~Carleman: for a symmetric Carleman kernel
that is the pointwise limit of its symmetric Hilbert-Schmidt subkernels
satisfying a mean square continuity condition, he constructed in
\cite[pp.~25-51]{Carl:book} its spectral functions as pointwise limits of
sequences of spectral functions for the subkernels. (For further developments and applications of
Carleman's spectral theory we refer to \cite{Neu}, \cite{Trji}, \cite{Akh}, 
\cite[Appendix~I]{Akh:Glaz}, \cite{Costley}, \cite{Will}, and \cite{Kor:book1}.)
\par
Following this subkernel approach we focus in present paper on the question
whether and at what regular values $\lambda$ the resolvent kernel
$\boldsymbol{T}_{\mid\lambda}$ for a $K^0$-kernel $\boldsymbol{T}$
can be expressed as the limit of a sequence of resolvent kernels for 
the subkernels of $\boldsymbol{T}$. The main result of the paper is
Theorem~\ref{thmeq3.1} describing such regular values $\lambda$ in terms
of generalized strong convergence, introduced by T.~Kato in \cite{Kato:book}.
\section{Notation, Definitions, and Auxiliary Facts}
\subsection{Fredholm Resolvents and Characteristic Sets}\label{freres}
Throughout this paper, the symbols $\mathbb{C}$ and $\mathbb{N}$ refer to the
complex plane and the set of all positive integers, respectively, $\mathbb{R}$
is the real line equipped with the Lebesgue measure, and $L^2=L^2(\mathbb{R})$
is the complex Hilbert space of (equivalence classes of) measurable
complex-valued functions on $\mathbb{R}$ equipped with the inner product
$\langle f,g\rangle=\int f(s)\overline{g(s)}\,ds$ and the norm
$\left\|f\right\|=\langle f,f\rangle^{\frac{1}2}$. (Integrals with no indicated
domain, such as the above, are always to be extended over $\mathbb{R}$.) If
$L\subset L^2$, we write $\overline{L}$ for the norm closure of $L$ in $L^2$,
$L^{\perp}$ for the orthogonal complement of $L$ in $L^2$, and
$\mathrm{Span}(L)$ for the norm closure of the set of all linear combinations
of elements of $L$. Recall that a set $L$ in a normed space $Y$ is said to be
\textit{relatively compact} in $Y$ if each sequence of elements from $L$
contains a subsequence converging in the norm of $Y$.
\par
Let $\mathfrak{R}\left(L^2\right)$ denote the Banach algebra of all bounded
linear operators acting on $L^2$; $\|\cdot\|$  will also denote the norm in
$\mathfrak{R}\left(L^2\right)$. For an operator $A$ of $\mathfrak{R}(L^2)$,
$A^*$ stands for the adjoint to $A$ with respect to
$\langle \cdot,\cdot\rangle$, $\mathrm{Ran\,}A=\left\{Af\mid f\in L^2\right\}$
for the range of $A$, and $\mathrm{Ker\,} A=\left\{f\in L^2\mid Af=0\right\}$
for the null-space of $A$. An operator $U\in\mathfrak{R}(L^2)$ is said to be
\textit{unitary} if $\mathrm{Ran\,}U=L^2$ and
$\langle Uf,Ug\rangle=\langle f,g\rangle$ for all $f$, $g\in L^2$. An operator
$A\in\mathfrak{R}(L^2)$ is said to be \textit{invertible} if it has an inverse
which is also in $\mathfrak{R}(L^2)$, that is, if there is an operator
$B\in\mathfrak{R}(L^2)$ for which $BA=AB=I$, where $I$ is the identity operator
on $L^2$; $B$ is denoted by $A^{-1}$. An operator $P\in\mathfrak{R}(L^2)$ is
called a \textit{projection} in $L^2$ if $P^2=P$, and a projection $P$ in $L^2$
is said to be \textit{orthogonal} if $P=P^*$. An operator $T\in\mathfrak{R}(L^2)$
is said to be \textit{compact} if it transforms every bounded set in $L^2$ into
a relatively compact set in $L^2$. A (compact) operator $A\in\mathfrak{R}(L^2)$
is \textit{nuclear} if
$\sum_n\left|\left\langle Au_n,u_n\right\rangle\right|<\infty$
for any choice of an orthonormal basis $\{u_n\}$ of $L^2$.
\par
Throughout the rest of this subsection, $T$ denotes a bounded linear operator
of $\mathfrak{R}\left(L^2\right)$. The set of \textit{regular values} for $T$,
denoted by $\Pi(T)$, is the set of complex numbers $\lambda$ such that the
operator $I-\lambda T$ is invertible, that is, it has an inverse
$R_\lambda(T)=\left(I-\lambda T\right)^{-1}
\in\mathfrak{R}\left(L^2\right)$ that satisfies
\begin{equation}\label{eqress}
\left(I-\lambda T\right)R_\lambda(T)=R_\lambda(T)\left(I-\lambda T\right)=I.
\end{equation}
The operator
\begin{equation}\label{eqresf}
T_{\mid\lambda}:= TR_\lambda(T)\ (=R_\lambda(T)T)
\end{equation}
is then referred to as the \textit{Fredholm resolvent} of $T$ at $\lambda$.
Remark that if $\lambda$ is a regular value for $T$, then, for each fixed
$g$ in $L^2$, the (unique) solution $f$ of $L^2$ to the second-kind equation
$f-\lambda Tf=g$ may be written as
\begin{equation}\label{equnsol}
f=g+\lambda T_{\mid\lambda}g
\end{equation}
(follows from the formula
\begin{equation}\label{resFres}
R_\lambda(T)=I+\lambda T_{\mid\lambda}
\end{equation}
which is a rewrite of \eqref{eqress}). Recall that the inverse 
$R_\lambda(T)$ of $I-\lambda T$ as a function of $T$ also satisfies
the following identity, often referred to as the
\textit{second resolvent equation} (see, e.g., \cite[Theorem~5.16.1]{Hille}):
for $T$, $A\in\mathfrak{R}\left(L^2\right)$,
\begin{equation}
\begin{split}\label{secResEq}
R_\lambda(T)-R_\lambda(A)&=\lambda R_\lambda(T)(T-A)R_\lambda(A)\\&=
\lambda R_\lambda(A)(T-A)R_\lambda(T)\quad\text{for every $\lambda\in\Pi(T)\cap\Pi(A)$.}
\end{split}
\end{equation}
(A slightly modified version of it is
\begin{equation}
\begin{split}\label{secFResEq}
T_{\mid\lambda}-A_{\mid\lambda}
&=(I+\lambda T_{\mid\lambda})(T-A)(I+\lambda A_{\mid\lambda})\\&=
(I+\lambda A_{\mid\lambda})(T-A)(I+\lambda T_{\mid\lambda}) \quad\text{for every $\lambda\in\Pi(T)\cap\Pi(A)$,}
\end{split}
\end{equation}
which involves the Fredholm resolvents.)
It should also be mentioned that the map $R_\lambda(T)\colon\Pi(T)\to\mathfrak{R}\left(L^2\right)$
(resp., $T_\lambda\colon\Pi(T)\to\mathfrak{R}\left(L^2\right)$) is
continuous at every point $\lambda$ of the open set $\Pi(T)$, in the sense
that $\|R_{\lambda_n}(T)-R_{\lambda}(T)\|\to0$ (resp.,
$\|T_{\lambda_n}-T_{\lambda}\|\to0$) when $\lambda_n\to\lambda$,
$\lambda_n\in\Pi(T)$ (see, e.g., \cite[Lemma~2 (XIII.4.3)]{KanAk}).
Moreover, $R_\lambda(T)$ is given by an operator-norm convergent series ($T^0=I$):
\begin{equation}\label{resseries}
R_\lambda(T)=\sum_{n=0}^\infty\lambda^nT^n
\quad\text{provided $|\lambda|<r(T):=\frac1{\lim\limits_{n\to\infty}\sqrt[n]{\|T^n\|}}$}
\end{equation}
(see, e.g., \cite[Theorem~1 (XIII.4.2)]{KanAk}).
For notational simplicity, we shall always write $R_\lambda^*(T)$ for the
adjoint $\left(R_\lambda(T)\right)^*$ to $R_\lambda(T)$.
\par
The \textit{characteristic set $\Lambda(T)$} for $T$ is defined to be the
complementary set in $\mathbb{C}$ of $\Pi(T)$: $\Lambda(T)=\mathbb{C}\setminus\Pi(T)$.
\par
Given a sequence $\left\{S_n\right\}_{n=1}^\infty$ of bounded operators on
$L^2$, let $\nabla_\mathfrak{b}(\{S_n\})$ denote the set of all nonzero
complex numbers $\zeta$ for which there exist positive constants $M(\zeta)$
and $N(\zeta)$ such that
\begin{equation}\label{eq3.2}
\zeta\in\Pi(S_n)\ \text{and}\ \left\|S_{n\mid\zeta}\right\|\leqslant M(\zeta)
\quad\text{for $n>N(\zeta)$,}
\end{equation}
where, as in what follows,  $S_{n\mid\zeta}$ stands for the Fredholm resolvent
of $S_n$ at $\zeta$, and let $\nabla_\mathfrak{s}(\{S_n\})$ denote the set of
all nonzero complex numbers $\zeta$ ($\in\nabla_\mathfrak{b}(\{S_n\})$)
for which the sequence $\left\{S_{n\mid\zeta}\right\}$ is convergent
in the strong operator topology (that is to say, the limit
$\lim\limits_{n\to\infty}S_{n\mid\zeta}f$ exists in $L^2$ for every $f\in L^2$).
\begin{remark}\label{1zremark}
The set $\nabla_\mathfrak{b}(\{S_n\})$ (resp. $\nabla_\mathfrak{s}(\{S_n\})$)
evidently remains unchanged if in its definition the Fredholm resolvents
$S_{n\mid\zeta}$ are replaced by the operators
$R_\zeta(S_n)=(I-\zeta S_n)^{-1}=I+\zeta S_{n\mid\zeta}$ (cf. \eqref{resFres}).
So, if $\Delta_\mathrm{b}$ (resp., $\Delta_\mathrm{s}$) is the \textit{region of
boundedness} (resp., \textit{strong convergence}) for the resolvents
$\left\{(\zeta I-S_n)^{-1}\right\}$,
which was introduced and studied in \cite[Section VIII-1.1]{Kato:book}, then
the sets $\nabla_\mathfrak{b}(\{S_n\})$ and $\Delta_\mathrm{b}\setminus\{0\}$
(resp., $\nabla_\mathfrak{s}(\{S_n\})$ and $\Delta_\mathrm{s}\setminus\{0\}$)
are mapped onto each other by the mapping $\zeta\to\zeta^{-1}$.
In the course of the proof of Theorem~\ref{thmeq3.1} below, this mapping is
always kept in mind when referring to \cite{Kato:book} for generalized strong
convergence theory.
\end{remark}
\subsection{Integral Operators}
A linear operator $T:L^2\to L^2$ is \textit{integral} if there is a
complex-valued measurable function $\boldsymbol{T}$ (\textit{kernel}) on
$\mathbb{R}^2$ such that
\begin{equation*}
               (Tf)(s)=\int\boldsymbol{T}(s,t)f(t)\,dt
\end{equation*}
for every $f\in L^2$ and almost every $s\in\mathbb{R}$. Recall
\cite[Theorem 3.10]{Halmos:Sun} that integral operators are bounded, and need
not be compact. A measurable function $\boldsymbol{T}:\mathbb{R}^2\to\mathbb{C}$
is said to be a \textit{Carleman kernel} if $\boldsymbol{T}(s,\cdot)\in L^2$ for almost every fixed
$s$ in $\mathbb{R}$. To each Carleman kernel $\boldsymbol{T}$ there corresponds
a \textit{Carleman function} $\boldsymbol{t}:\mathbb{R}\to L^2$ defined by
$\boldsymbol{t}(s)=\overline{\boldsymbol{T}(s,\cdot)}$ for all $s$ in
$\mathbb{R}$ for which $\boldsymbol{T}(s,\cdot)\in L^2$. The Carleman kernel
$\boldsymbol{T}$ is called \textit{bi-Carleman} in case its conjugate
transpose kernel $\boldsymbol{T}^{\boldsymbol{\prime}}$
($\boldsymbol{T}^{\boldsymbol{\prime}}(s,t)=\overline{\boldsymbol{T}(t,s)}$)
is also a Carleman kernel. Associated with the conjugate transpose
$\boldsymbol{T}^{\boldsymbol{\prime}}$ of every bi-Carleman kernel
$\boldsymbol{T}$ there is therefore a Carleman function
$\boldsymbol{t}^{\boldsymbol{\prime}}:\mathbb{R}\to L^2$ defined by
$\boldsymbol{t}^{\boldsymbol{\prime}}(s)
=\overline{\boldsymbol{T}^{\boldsymbol{\prime}}(s,\cdot)}
\left(=\boldsymbol{T}(\cdot,s)\right)$ for all $s$ in $\mathbb{R}$
for which $\boldsymbol{T}^{\boldsymbol{\prime}}(s,\cdot)\in L^2$.
With each bi-Carleman kernel $\boldsymbol{T}$, we therefore associate the pair
of Carleman functions $\boldsymbol{t}$,
$\boldsymbol{t}^{\boldsymbol{\prime}}:\mathbb{R}\to L^2$, both defined, via
$\boldsymbol{T}$, as above. An integral operator whose kernel is Carleman
(resp., bi-Carleman) is referred to as the \textit{Carleman}
(resp., \textit{bi-Carleman}) operator. The integral operator $T$ is called
\textit{bi-integral} if its adjoint $T^*$ is also an integral operator;
in that case if $\boldsymbol{T}^{\boldsymbol{\ast}}$ is the kernel of $T^*$
then, in the above notation,
$\boldsymbol{T}^{\boldsymbol{\ast}}(s,t)=\boldsymbol{T}^{\boldsymbol{\prime}}(s,t)$
for almost all $(s,t)\in\mathbb{R}^2$ (see, e.g., \cite[Theorem 7.5]{Halmos:Sun}).
A bi-Carleman operator is always a bi-integral operator, but not conversely.
The bi-integral operators are generally involved in second-kind integral
equations (like \eqref{skequ}) in $L^2$, as the adjoint equations to such
equations are customarily required to be integral.
A kernel $\boldsymbol{T}$ on $\mathbb{R}^2$ is said to be
\textit{Hilbert-Schmidt} if
$\int\int|\boldsymbol{T}(s,t)|^2\,dt\,ds<\infty.$
A nuclear operator on $L^2$ is always an integral operator, whose kernel is Hilbert-Schmidt (see, e.g., \cite{Reed:Sim1}).
We shall employ the convention of referring to integral operators by
italic caps and to the corresponding kernels (resp., Carleman functions)
by the same letter, but written in upper case (resp., lower case) bold-face type.
Thus, e.g., if $T$ denotes, say, a bi-Carleman operator, then
$\boldsymbol{T}$ and $\boldsymbol{t}$, $\boldsymbol{t}^{\boldsymbol{\prime}}$
are to be used to denote its kernel and two Carleman functions, respectively.
\par
We conclude this subsection by recalling an important algebraic property of Carleman operators
which will be exploited frequently throughout the text, a property
which is the content of the following so-called ``Right-Multipilication Lemma''
(cf. \cite{Misra}, \cite[Corollary IV.2.8]{Kor:book1}, or
\cite[Theorem 11.6]{Halmos:Sun}):
\begin{proposition}\label{rimlt}
Let $T$ be a Carleman operator, let $\boldsymbol{t}$ be the Carleman function 
associated with the inducing Carleman kernel of $T$, and let
$A\in\mathfrak{R}\left(L^2\right)$ be arbitrary. Then the product operator $TA$
is also a Carleman operator, and the composition function
\begin{equation}\label{frimlt}
A^*(\boldsymbol{t}(\cdot)):\mathbb{R}\to L^2
\end{equation}
is the Carleman function associated with its kernel.
\end{proposition}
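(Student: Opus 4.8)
The plan is to reduce everything to the pointwise identity describing how a Carleman operator acts through its Carleman function, and then to verify that the natural candidate for the Carleman function of $TA$ genuinely arises from a jointly measurable kernel on $\mathbb{R}^2$. Writing $\boldsymbol{T}$ for the inducing kernel of $T$ and recalling $\boldsymbol{t}(s)=\overline{\boldsymbol{T}(s,\cdot)}$, I would first rewrite the defining integral for $T$, for every $f\in L^2$ and almost every $s$, as $(Tf)(s)=\langle f,\boldsymbol{t}(s)\rangle$. Applying this with $Af$ in place of $f$ gives
\begin{equation*}
(TAf)(s)=\langle Af,\boldsymbol{t}(s)\rangle=\langle f,A^*\boldsymbol{t}(s)\rangle
\quad\text{for every $f\in L^2$ and a.e. $s$,}
\end{equation*}
which already pins down the candidate $\boldsymbol{u}(s):=A^*(\boldsymbol{t}(s))$. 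The pointwise membership $\boldsymbol{u}(s)\in L^2$ for a.e. $s$ is immediate, since $\boldsymbol{t}(s)\in L^2$ for a.e. $s$ and $A^*\in\mathfrak{R}(L^2)$.

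The real content, and the step I expect to be the main obstacle, is to show that $\boldsymbol{u}$ is induced by an honest kernel, i.e. that there is a jointly measurable $\boldsymbol{U}:\mathbb{R}^2\to\mathbb{C}$ with $\boldsymbol{U}(s,\cdot)=\overline{\boldsymbol{u}(s)}$ for a.e. $s$. Once this is in hand, the displayed identity reads $(TAf)(s)=\int\boldsymbol{U}(s,t)f(t)\,dt$, exhibiting $TA$ as the integral operator with kernel $\boldsymbol{U}$, and $\boldsymbol{U}(s,\cdot)=\overline{A^*\boldsymbol{t}(s)}\in L^2$ a.e. makes it Carleman with Carleman function $\boldsymbol{u}=A^*(\boldsymbol{t}(\cdot))$, as claimed. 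As a preliminary I would check that $\boldsymbol{t}:\mathbb{R}\to L^2$ is strongly measurable: for each fixed $g\in L^2$ the scalar function $s\mapsto\langle\boldsymbol{t}(s),g\rangle=\int\overline{\boldsymbol{T}(s,t)}\,\overline{g(t)}\,dt$ is measurable by Tonelli applied to the jointly measurable $\boldsymbol{T}$, so $\boldsymbol{t}$ is weakly measurable; weak measurability plus separability of $L^2$ then yields strong measurability via Pettis's theorem, and since $A^*$ is continuous, $\boldsymbol{u}=A^*\circ\boldsymbol{t}$ is strongly measurable as well.

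To manufacture the kernel from $\boldsymbol{u}$, I would fix an orthonormal basis $\{e_k\}$ of $L^2$ and set $\boldsymbol{U}_N(s,t)=\sum_{k\le N}(TAe_k)(s)\,\overline{e_k(t)}$, which is jointly measurable since $\langle\boldsymbol{u}(s),e_k\rangle=\langle\boldsymbol{t}(s),Ae_k\rangle=\overline{(TAe_k)(s)}$ is measurable in $s$ while $e_k$ is measurable in $t$. For a.e. fixed $s$, the basis expansion gives $\boldsymbol{U}_N(s,\cdot)\to\overline{\boldsymbol{u}(s)}$ in $L^2(dt)$. The delicate point is that this $L^2$-convergence in $t$ for a.e. $s$ does not furnish pointwise convergence of $\boldsymbol{U}_N(s,t)$, so $\boldsymbol{U}$ cannot simply be defined as a pointwise limit. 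I would instead invoke the standard completeness lemma that a sequence $\{\boldsymbol{U}_N\}$ of jointly measurable functions whose slices $\boldsymbol{U}_N(s,\cdot)$ converge in $L^2(dt)$ for a.e. $s$ admits a jointly measurable $\boldsymbol{U}$ with $\boldsymbol{U}(s,\cdot)=L^2\text{-}\lim_N\boldsymbol{U}_N(s,\cdot)$ a.e. Its proof writes $\mathbb{R}=\bigcup_m[-m,m]$, uses that $s\mapsto\|\boldsymbol{U}_N(s,\cdot)-\boldsymbol{U}_M(s,\cdot)\|_{L^2(dt)}$ is measurable by Tonelli and tends to $0$ a.e., hence in measure on each $[-m,m]$, and extracts by a diagonal Borel--Cantelli argument a subsequence summable in $L^2(dt)$ for a.e. $s$, whose pointwise limit on the jointly measurable set where it exists serves as $\boldsymbol{U}$. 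This yields the required $\boldsymbol{U}$ and completes the proof; alternatively, this measurability step is precisely the content of the cited \cite[Theorem~11.6]{Halmos:Sun} and could be invoked verbatim.
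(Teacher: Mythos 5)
Your proof is correct; note that the paper does not actually prove Proposition~\ref{rimlt} but delegates it to the literature (in particular to \cite[Theorem~11.6]{Halmos:Sun}), and your argument --- the identity $(TAf)(s)=\langle f,A^*(\boldsymbol{t}(s))\rangle$, Pettis measurability of $\boldsymbol{t}$, the orthonormal-basis approximants $\boldsymbol{U}_N$, and the extraction of a jointly measurable kernel from a sequence of jointly measurable functions whose slices converge in $L^2(dt)$ --- is precisely the standard proof underlying that citation. The step you rightly single out as the real content, namely producing a jointly measurable representative of $s\mapsto\overline{A^*(\boldsymbol{t}(s))}$ rather than attempting a naive pointwise definition, is handled correctly by your Borel--Cantelli subsequence argument.
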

\subsection{$\boldsymbol{K^0}$-Kernels}
If $k$ is in $\mathbb{N}$ and $B$ is a Banach space with norm $\left\|\cdot\right\|_B$, let
$C(\mathbb{R}^k,B)$ denote the Banach space, with the norm
$\left\|f\right\|_{C(\mathbb{R}^k,B)}=\sup\limits_{x\in \mathbb{R}^k}\left\|f(x)\right\|_B$,
of all continuous functions $f$ from $\mathbb{R}^k$ into $B$ such that
$\lim\limits_{|x|\to\infty}\|f(x)\|_B=0$,
where $|\cdot|$ is the euclidian norm in $\mathbb{R}^k$.
Given an equivalence class $f\in L^2$ containing a function of
$C(\mathbb{R},\mathbb{C})$, the symbol $[f]$ is used to mean that
function.
\begin{definition} \label{def:ker}
A bi-Carleman kernel $\boldsymbol{T}\colon\mathbb{R}^2\to\mathbb{C}$ is called 
a $K^0$-\textit{kernel} if the following three conditions are satisfied:
\par
(i) the function $\boldsymbol{T}$ is in $C\left(\mathbb{R}^2,\mathbb{C}\right)$,
\par
(ii) the Carleman function $\boldsymbol{t}$ associated with $\boldsymbol{T}$,
$\boldsymbol{t}(s)=\overline{\boldsymbol{T}(s,\cdot)}$, is in
$C\left(\mathbb{R},L^2\right)$,
\par
(iii) the Carleman function $\boldsymbol{t}^{\boldsymbol{\prime}}$ associated
with the conjugate transpose $\boldsymbol{T}^{\boldsymbol{\prime}}$ of
$\boldsymbol{T}$, $\boldsymbol{t}^{\boldsymbol{\prime}}(s)
=\overline{\boldsymbol{T}^{\boldsymbol{\prime}}(s,\cdot)}=
\boldsymbol{T}(\cdot,s)$, is in
$C\left(\mathbb{R},L^2\right)$.
\end{definition}
What follows is a brief discussion of some properties of $K^0$-kernels
relevant for this paper. In the first place, note that  the conditions figuring in Definition~\ref{def:ker}
do not depend on each other in general; it is therefore natural to discuss the
role played by each of them separately. The more restrictive of these conditions 
is (i), in the sense that it rules out the possibility for any $K^0$-kernel
(unless that kernel is identically zero) of being a function depending only on 
the sum, difference, or product of the variables;
there are many other less trivial examples of inadmissible dependences.
This circumstance may be of use in constructing examples of those bi-Carleman 
kernels that have both the properties (ii) and (iii), but do not enjoy (i); 
for another reason of existence of such type bi-Carleman kernels, we refer to 
a general remark in \cite[p.~115]{Zaanen} also concerning compactly supported kernels. In this connection, it can, however,
be asserted that if a function $\boldsymbol{T}\in C\left(\mathbb{R}^2,\mathbb{C}\right)$
additionally satisfies $\left|\boldsymbol{T}(s,t)\right|\leqslant p(s)q(t)$, with
$p$, $q$ being $C(\mathbb{R},\mathbb{R})$ functions square integrable over
$\mathbb{R}$, then $\boldsymbol{T}$ is a $K^0$-kernel, that is to say, the
Carleman functions $\boldsymbol{t}$, $\boldsymbol{t}^{\boldsymbol{\prime}}$ it 
induces are both in $C\left(\mathbb{R},L^2\right)$. The assertion may be proved
by an extension from the positive definite case with
$p(s)\equiv q(s)\equiv (\boldsymbol{T}(s,s))^\frac{1}{2}$ to this general case of Buescu's
argument in \cite[pp. 247--249]{Buescu1}.
\par
A few remarks are in order here concerning what can immediately be inferred
from the $C\left(\mathbb{R},L^2\right)$-behaviour of the Carleman functions
$\boldsymbol{t}$, $\boldsymbol{t}^{\boldsymbol{\prime}}$
associated with a given $K^0$-kernel $\boldsymbol{T}$
(thought of as a kernel of an integral operator $T\in\mathfrak{R}\left(L^2\right)$):
\par
1) The images of $\mathbb{R}$ under $\boldsymbol{t}$, 
$\boldsymbol{t}^{\boldsymbol{\prime}}$, that is,
\begin{equation}\label{precom}
\boldsymbol{t}(\mathbb{R}):=\bigcup\limits_{s\in\mathbb{R}}\boldsymbol{t}(s),
\quad\boldsymbol{t}^{\boldsymbol{\prime}}(\mathbb{R})
:=\bigcup\limits_{s\in\mathbb{R}}\boldsymbol{t}^{\boldsymbol{\prime}}(s),
\end{equation}
are relatively compact sets in $L^2$;
\par
2) The \textit{Carleman norm-functions} $\boldsymbol{\tau}$ and $\boldsymbol{\tau}^{\boldsymbol{\prime}}$,
defined on $\mathbb{R}$ by
$\boldsymbol{\tau}(s)=\left\|\boldsymbol{t}(s)\right\|$ and
$\boldsymbol{\tau}^{\boldsymbol{\prime}}(s)=\left\|\boldsymbol{t}^{\boldsymbol{\prime}}(s)\right\|$, respectively,
are continuous vanishing at infinity, that is to say,
\begin{equation}\label{eqnormkf}
\boldsymbol{\tau}, \boldsymbol{\tau}^{\boldsymbol{\prime}}\in C(\mathbb{R},\mathbb{R}).
\end{equation}
\par
3) The images $Tf$ and $T^*f$ of any $f\in L^2$ under $T$ and $T^*$, respectively, 
have $C(\mathbb{R},\mathbb{C})$-representatives in $L^2$, $[Tf]$ and $[T^*f]$,
defined pointwise on $\mathbb{R}$ as
\begin{equation}\label{TfT*f}
[Tf](s)=\langle f,\boldsymbol{t}(s)\rangle,
\quad
[T^*f](s)=\langle f,\boldsymbol{t}^{\boldsymbol{\prime}}(s)\rangle
\quad\text{for every $s$ in $\mathbb{R}$}.
\end{equation}
\par
4) Using \eqref{TfT*f}, it is easy to deduce that
$\boldsymbol{t}(\mathbb{R})^\perp=\mathrm{Ker\,}T$,
$\boldsymbol{t}^{\boldsymbol{\prime}}(\mathbb{R})^\perp=\mathrm{Ker\,}T^*$.
(Indeed:
\begin{gather*}
f\in \boldsymbol{t}(\mathbb{R})^\perp\iff\langle f,\boldsymbol{t}(s)\rangle=0\
\forall s\in\mathbb{R}\iff f\in \mathrm{Ker\,}T,
\\
f\in \boldsymbol{t}^{\boldsymbol{\prime}}(\mathbb{R})^\perp
\iff\langle f,\boldsymbol{t}^{\boldsymbol{\prime}}(s)\rangle\ \forall s\in\mathbb{R}\iff f\in \mathrm{Ker\,}T^*.)
\end{gather*}
The orthogonality between the range of an operator and the null-space of
its adjoint then yields
\begin{gather*}
\mathrm{Span\,}(\boldsymbol{t}(\mathbb{R}))=
\left(\boldsymbol{t}(\mathbb{R})^\perp\right)^\perp
=
\overline{\mathrm{Ran\,}T^*},
\\
\mathrm{Span\,}\left(\boldsymbol{t}^{\boldsymbol{\prime}}(\mathbb{R})\right)
=\left(\boldsymbol{t}^{\boldsymbol{\prime}}(\mathbb{R})^\perp\right)^\perp=
\overline{\mathrm{Ran\,}T}.
\end{gather*}
\par
5) The $n$-th iterant $\boldsymbol{T}^{[n]}$ ($n\geqslant2$) of the $K^0$-kernel
$\boldsymbol{T}$,
\begin{equation}\label{iterant}
\boldsymbol{T}^{[n]}(s,t):=\underbrace{\int\dots\int}_{n-1}\boldsymbol{T}(s,\xi_{1})\,\dots\,
\boldsymbol{T}(\xi_{n-1},t)\,d\xi_{1}\,\dots\,d\xi_{n-1}
\left(=\langle T^{n-2}\left(\boldsymbol{t}^{\boldsymbol{\prime}}(t)\right),
\boldsymbol{t}(s)\rangle\right),
\end{equation}
is a $K^0$-kernel that defines the integral operator $T^n$.
More generally, every two $K^0$-kernels $\boldsymbol{P}$, $\boldsymbol{Q}$
might be said to be \textit{multipliable} with each other, in the sense that 
their convolution
\begin{equation*}
\boldsymbol{C}(s,t):=\int\boldsymbol{P}(s,\xi)\boldsymbol{Q}(\xi,t)\,d\xi
\left(=\langle\boldsymbol{q}^{\boldsymbol{\prime}}(t),\boldsymbol{p}(s)\rangle\right)
\end{equation*}
exists at every point $(s,t)\in\mathbb{R}^2$, and forms a $K^0$-kernel
that defines the product operator $C=PQ$:
\begin{equation}\label{fkerPQ}
\begin{gathered}
\int\left\langle\boldsymbol{q}^{\boldsymbol{\prime}}(t),
\boldsymbol{p}(s)\right\rangle h(t)\,dt
=\int\left(\int\boldsymbol{P}(s,\xi)
\boldsymbol{Q}(\xi,t)\,d\xi\right)h(t)\,dt
=\left\langle h,Q^*(\boldsymbol{p}(s))\right\rangle
\\
=
\left\langle Qh,\boldsymbol{p}(s)\right\rangle
=
\int\boldsymbol{P}(s,\xi)
\left(\int\boldsymbol{Q}(\xi,t)h(t)\,dt\right)\,d\xi
=\left[PQh\right](s).
\end{gathered}
\end{equation}
Since both $\boldsymbol{p}$ and $\boldsymbol{q}^{\boldsymbol{\prime}}$ are
in $C\left(\mathbb{R},L^2\right)$ and both $P$ and $Q$ are in
$\mathfrak{R}\left(L^2\right)$,
the fact that $\boldsymbol{C}$ satisfies Definition~\ref{def:ker}
may be derived from the joint continuity of the inner product in its two arguments when proving (i),
and from Proposition~\ref{rimlt}, according to which
\begin{equation*}
\boldsymbol{c}(s)=\overline{\boldsymbol{C}(s,\cdot)}=Q^*(\boldsymbol{p}(s)),
\quad
\boldsymbol{c}^{\boldsymbol{\prime}}(s)=\boldsymbol{C}(\cdot,s)=
P\left(\boldsymbol{q}^{\boldsymbol{\prime}}(s)\right) \quad\text{for every $s$ in $\mathbb{R}$,}
\end{equation*}
when proving both (ii) and (iii).
\subsection{Sub-$\boldsymbol{K}^{0}$-Kernels}\label{}
If $\boldsymbol{T}$ is a $K^0$-kernel of an integral operator $T$, then impose
on $\boldsymbol{T}$ an extra condition of being of special parquet support:
\par
(iv) there exist positive reals $\tau_n$ ($n\in\mathbb{N}$) strictly
increasing to $+\infty$ such that, for each fixed $n$, the
\textit{subkernels $\boldsymbol{T}_n$, $\boldsymbol{\widetilde{T}}_n$ of $\boldsymbol{T}$},
defined on $\mathbb{R}^2$ by
\begin{equation}\label{ke1.3.3}
\boldsymbol{T}_n(s,t)=\chi_n(s)\boldsymbol{T}(s,t),\quad
\boldsymbol{\widetilde{T}}_n(s,t)=\boldsymbol{T}_n(s,t)\chi_n(t),
\end{equation}
are $K^0$-kernels, and the integral operators
\begin{equation}\label{TnTn}
T_n:=P_nT,\quad \widetilde{T}_n:=P_nTP_n
\end{equation}
they induce on $L^2$ are nuclear;
here, as in the rest of the paper, $\chi_n$ stands for the characteristic
function of the open interval $\mathbb{I}_n=\left(-\tau_n,\tau_n\right)$,
and $P_n$ for an orthogonal projection defined
on each $f\in L^2$ by $P_nf=\chi_n f$ (so that $(I-P_n)f=\widehat{\chi}_nf$ for each
$f\in L^2$, where $\widehat{\chi}_n$ is the characteristic function of
the set $\widehat{\mathbb{I}}_n:= \mathbb{R}\setminus\mathbb{I}_n$).
\par
Condition (iv) implies that, for each $n$, the kernel $\boldsymbol{T}(s,t)$
does vanish everywhere on the straight lines $s=\pm\tau_n$ and $t=\pm\tau_n$,
parallel to the axes of $t$ and $s$, respectively.
$P_n$ ($n\in\mathbb{N}$) form a sequence of orthogonal projections increasing to $I$
with respect to the strong operator topology, so that, for every $f\in L^2$,
\begin{equation}\label{eqPntoI}
\left\|\left(P_n-I\right)f\right\|\searrow 0\quad \text{as $n\to\infty$}.
\end{equation}
So it follows immediately from \eqref{TnTn} that
\begin{equation}\label{eqTntoT}
\begin{gathered}
\left\|\left({T}_n-T\right)f\right\|\to 0,\quad
\left\|\left(\widetilde{T}_n-T\right)f\right\|\to 0,\\
\left\|\left({T}^*_n-T^*\right)f\right\|\to 0,
\quad\left\|\left(\widetilde{T}^*_n-T^*\right)f\right\|\to 0
\end{gathered}
\end{equation}
as $n$ tends to infinity.
\par
Among the subkernels defined  in \eqref{ke1.3.3}, the $\boldsymbol{T}_n$ have more in
common with the original kernel $\boldsymbol{T}$, as
$[T_nf](s)=\int\boldsymbol{T}(s,t)f(t)\,dt$
for all $s\in\mathbb{I}_n$ and any $f\in L^2$, while
the subkernels $\boldsymbol{\widetilde{T}}_n$ are more suitable to deal
with $\boldsymbol{T}$ being Hermitian, that is, satisfying
$\boldsymbol{T}(s,t)=\overline{\boldsymbol{T}(t,s)}$ for all $s$,
$t\in\mathbb{R}$, because then they all are also Hermitian.
\par
Now we list some basic properties of the subkernels defined in \eqref{ke1.3.3},
most of which are obvious from the definition:
\begin{equation}\label{ke1.3.1in}
\left|\boldsymbol{T}_n(s,t)\right|\leqslant\left|\boldsymbol{T}(s,t)\right|,
\quad\left|\boldsymbol{\widetilde{T}}_n(s,t)\right|\leqslant\left|\boldsymbol{T}(s,t)\right|,
\quad\text{for all $s$, $t\in\mathbb{R}$},
\end{equation}
\begin{equation}\label{ke1.3.1}
\lim_{n\to\infty}\left\|\boldsymbol{T}_n-\boldsymbol{T}\right\|_{C\left(\mathbb{R}^2,\mathbb{C}\right)}=0,\quad
\lim_{n\to\infty}\left\|\boldsymbol{\widetilde{T}}_n-\boldsymbol{T}\right\|_{C\left(\mathbb{R}^2,\mathbb{C}\right)}=0,
\end{equation}
\begin{equation}\label{ke1.3.3gsch}
\int\int\left|\boldsymbol{T}_n(s,t)\right|^2\,dt\,ds<\infty,
\quad\int\int\left|\boldsymbol{\widetilde{T}}_n(s,t)\right|^2\,dt\,ds<\infty,
\end{equation}
\begin{equation}\label{ke1.3.2}
\begin{gathered}
\lim_{n\to\infty}\left\|\boldsymbol{t}_n-\boldsymbol{t}\right\|_{C\left(\mathbb{R},L^2\right)}=0,\quad
\lim_{n\to\infty}\left\|\boldsymbol{t}^{\boldsymbol{\prime}}_n-\boldsymbol{t}^{\boldsymbol{\prime}}\right\|_{C\left(\mathbb{R},L^2\right)}=0,
\\
\lim_{n\to\infty}\left\|\boldsymbol{\widetilde{t}}_n-\boldsymbol{t}\right\|_{C\left(\mathbb{R},L^2\right)}=0,\quad
\lim_{n\to\infty}\left\|\boldsymbol{\widetilde{t}}^{\boldsymbol{\prime}}_n-\boldsymbol{t}^{\boldsymbol{\prime}}\right\|_{C\left(\mathbb{R},L^2\right)}=0,
\end{gathered}
\end{equation}
where
\begin{equation}\label{kesubcf}
\begin{gathered}
\boldsymbol{t}_n(s)=\overline{\boldsymbol{T}_n(s,\cdot)}=\chi_n(s)\boldsymbol{t}(s),
\quad\boldsymbol{t}^{\boldsymbol{\prime}}_n(t)=\boldsymbol{T}_n(\cdot,t)
=P_n\left(\boldsymbol{t}^{\boldsymbol{\prime}}(t)\right),
\\
\boldsymbol{\widetilde{t}}_n(s)=\overline{\boldsymbol{\widetilde{T}}_n(s,\cdot)}=\chi_n(s)P_n\left(\boldsymbol{t}(s)\right),
\quad
\boldsymbol{\widetilde{t}}^{\boldsymbol{\prime}}_n(t)=\boldsymbol{\widetilde{T}}_n(\cdot,t)
=\chi_n(t)P_n\left(\boldsymbol{t}^{\boldsymbol{\prime}}(t)\right)
\end{gathered}
\end{equation}
are the associated Carleman functions.
The limits in \eqref{ke1.3.2} all hold due to (ii), (iii), \eqref{eqPntoI}, and
a result from \cite[Lemma~3.7, p.~151]{Kato:book}. The result, just referred
to, will be used in the text so often that it should be explicitly stated.
\begin{lemma}\label{lemKato}
Let $S_n$, $S\in\mathfrak{R}\left(L^2\right)$, and suppose that,
for any $x\in L^2$,
$\|S_nx-Sx\|\to0$ as $n\to\infty$. Then for any relatively compact set
$U$ in $L^2$
\begin{equation}\label{simKatolem}
\sup_{x\in U}\|S_nx-Sx\|\to0\quad\text{as $n\to\infty$}.
\end{equation}
\end{lemma}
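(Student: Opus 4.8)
The plan is to reduce the statement to a standard $\varepsilon/3$ approximation argument, after first securing a \emph{uniform} bound on the operators $S_n$. The point is that pointwise (strong) convergence alone does not give uniform convergence even on compact sets; what makes it work is that the hypothesis secretly supplies uniform boundedness.

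First I would observe that the strong convergence hypothesis $\|S_nx-Sx\|\to0$ forces, for each fixed $x\in L^2$, the sequence $\{S_nx\}$ to be bounded in $L^2$, so that $\sup_n\|S_nx\|<\infty$ for every $x$. By the Banach--Steinhaus theorem (uniform boundedness principle) this yields a finite constant $M:=\sup_n\|S_n\|<\infty$. Putting $C:=M+\|S\|$, one then has the uniform estimate $\|(S_n-S)x\|\leqslant C\|x\|$ for all $n\in\mathbb{N}$ and all $x\in L^2$. This is the heart of the argument: it is precisely this uniform bound that permits pointwise convergence to be upgraded to convergence uniform over relatively compact sets.

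Next I would exploit the relative compactness of $U$. Since $L^2$ is complete, a relatively compact subset is totally bounded, so for a prescribed $\varepsilon>0$ I can choose finitely many points $x_1,\dots,x_k\in U$ forming an $\varepsilon/(3C)$-net of $U$, meaning that every $x\in U$ satisfies $\|x-x_j\|<\varepsilon/(3C)$ for some $j\in\{1,\dots,k\}$. Using the strong convergence at each of these finitely many net points, I would then pick $N$ so large that $\|(S_n-S)x_j\|<\varepsilon/3$ holds simultaneously for all $n>N$ and all $j=1,\dots,k$ (possible because there are only finitely many $j$). For an arbitrary $x\in U$ and $n>N$, choosing $j$ with $\|x-x_j\|<\varepsilon/(3C)$ and splitting
\[
\|(S_n-S)x\|\leqslant\|(S_n-S)(x-x_j)\|+\|(S_n-S)x_j\|\leqslant C\|x-x_j\|+\frac{\varepsilon}{3}<\varepsilon
\]
produces a bound independent of $x$. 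Hence $\sup_{x\in U}\|(S_n-S)x\|\leqslant\varepsilon$ for every $n>N$, and since $\varepsilon>0$ is arbitrary, \eqref{simKatolem} follows.

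I expect the only genuinely nontrivial ingredient to be the passage to the uniform bound $M<\infty$ via Banach--Steinhaus; once that is in hand, the $\varepsilon/3$ estimate on a finite net is routine. It is worth noting that the middle inequality $\|(S_n-S)(x-x_j)\|\leqslant C\|x-x_j\|$ is exactly where the uniform bound is consumed, and that taking the net points inside $U$ itself (rather than merely in its closure) is harmless, since total boundedness of $U$ already furnishes such points.
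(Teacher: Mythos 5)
Your proof is correct and is essentially the standard argument behind this lemma, which the paper does not prove itself but simply cites from Kato (Lemma~3.7, p.~151 of \emph{Perturbation theory for linear operators}): uniform boundedness via Banach--Steinhaus, followed by an $\varepsilon/3$ approximation over a finite net supplied by total boundedness of the relatively compact set $U$. No gaps; the argument matches the cited source's approach.
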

Applying this lemma to the sets $\boldsymbol{t}(\mathbb{R})$ and
$\boldsymbol{t}^{\boldsymbol{\prime}}(\mathbb{R})$ of \eqref{precom}
immediately gives that
\begin{equation}\label{Katolem}
\sup_{s\in\mathbb{R}}\|(S_n-S)(\boldsymbol{t}(s))\|\to0,
\quad
\sup_{t\in\mathbb{R}}\|(S_n-S)(\boldsymbol{t}^{\boldsymbol{\prime}}(t))\|\to0
\quad\text{as $n\to\infty$}.
\end{equation}
We would like to close this section with a unitary equivalence result which is
essentially contained in Theorem~1 of \cite{nov:Lon},  where it is proved for
operators on $L^2[0,+\infty)$ and with the sequence $\{t_n\}$ playing the role
of the sequence $\{\tau_n\}$ for condition (iv).
\begin{proposition}\label{uneqv}
Suppose that $S$ is a bi-integral operator on $L^2$.
Then there exists a unitary operator $U\colon L^2\to L^2$ such that the
operator $T=USU^{-1}$ is a bi-Carleman operator on $L^2$, whose kernel is
a $K^0$-kernel satisfying condition (iv).
\end{proposition}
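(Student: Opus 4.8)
The statement is essentially Theorem~1 of \cite{nov:Lon}, where it is proved on $L^2[0,+\infty)$ with a sequence $\{t_n\}$ in the role of $\{\tau_n\}$, so the first move is purely organizational: I would fix once and for all a measure-preserving identification of $[0,+\infty)$ with $\mathbb{R}$, compose with the induced unitary, and relabel $\{t_n\}$ as the levels $\{\tau_n\}$ demanded by condition~(iv). After this relabelling the task reduces to producing a single unitary $U$ on $L^2=L^2(\mathbb{R})$ for which $T=USU^{-1}$ meets conditions (i)--(iv) of Definition~\ref{def:ker} simultaneously.

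The genuine construction starts from the bi-integral hypothesis. Since both $S$ and $S^*$ are integral operators, the Korotkov theory of Carleman operators \cite{Kor:book1} supplies orthonormal sequences $\{e_n\}$ and $\{f_n\}$, tending weakly to $0$, along which $S^*$, respectively $S$, vanish in norm, i.e.\ $\|S^*e_n\|\to0$ and $\|Sf_n\|\to0$. Out of these two vanishing families I would assemble an adapted orthonormal basis $\{v_n\}$ of $L^2$ and a matching system of continuous, rapidly decaying target functions $\psi_n\in C(\mathbb{R},\mathbb{C})\cap L^2$ whose effective supports march off to infinity along $\mathbb{R}$, and define $U$ by $Uv_n=\psi_n$. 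With respect to this data the kernel of $T$ is recovered as the double series $\boldsymbol{T}(s,t)=\sum_{m,n}\langle Sv_n,v_m\rangle\,\psi_m(s)\overline{\psi_n(t)}$, while the two Carleman functions $\boldsymbol{t}$, $\boldsymbol{t}^{\boldsymbol{\prime}}$ appear as $L^2$-valued series whose coefficients are governed by the vectors $Sv_m$ and $S^*v_m$. The norm-vanishing inherited from $\{e_n\}$, $\{f_n\}$, combined with the decay and asymptotic disjointness of the supports of the $\psi_m$, forces all these series to converge uniformly; this yields at one stroke the joint continuity of $\boldsymbol{T}$ on $\mathbb{R}^2$ (condition (i)) and the membership of $\boldsymbol{t}$, $\boldsymbol{t}^{\boldsymbol{\prime}}$ in $C(\mathbb{R},L^2)$ with vanishing at infinity (conditions (ii), (iii)). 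Here the joint continuity of the inner product handles (i), and Proposition~\ref{rimlt} is the natural device for identifying the Carleman functions of the conjugated operator.

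It remains to secure condition~(iv), and this is where the main difficulty lies. I would spend the freedom still available in the construction---the precise decay rate of the $\psi_n$ and the exact location of their supports---to place the levels $\tau_n\uparrow+\infty$ in the gaps between consecutive supports, so that $\boldsymbol{T}$ vanishes on each grid line $s=\pm\tau_n$, $t=\pm\tau_n$ and the subkernels $\boldsymbol{T}_n$, $\boldsymbol{\widetilde{T}}_n$ of \eqref{ke1.3.3} are themselves $K^0$-kernels. The nuclearity of $T_n=P_nT$ and $\widetilde{T}_n=P_nTP_n$ must then be read off from the same series: cutting down to $s\in\mathbb{I}_n$ retains only finitely many of the marching bumps at full strength, and the tails are dominated by the convergent sums $\sum\|S^*e_k\|$, $\sum\|Sf_k\|$ that the construction can be arranged to produce, upgrading \eqref{ke1.3.3gsch} from Hilbert--Schmidt to nuclear. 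The crux of the whole argument is the \emph{simultaneity}: one and the same unitary $U$ must render \emph{both} Carleman functions continuous and vanishing at infinity---this bi-Carleman demand is exactly why vanishing data are needed for $S$ \emph{and} $S^*$---while at the same instant endowing every compactly cut-off piece with summable enough decay to be nuclear. Balancing the smoothness required by (i)--(iii) against the tail-summability required by (iv) within a single construction is the real work, and it is this balance that \cite{nov:Lon} carries out.
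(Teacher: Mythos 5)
The paper offers no proof of Proposition~\ref{uneqv} at all: it is stated as ``essentially contained in Theorem~1 of \cite{nov:Lon}'' and left there, so your reconstruction can only be measured against that external construction. Against that standard, two steps of your sketch would fail as written. First, the passage from $L^2[0,+\infty)$ to $L^2(\mathbb{R})$ is not ``purely organizational.'' A measure-preserving identification of $[0,+\infty)$ with $\mathbb{R}$ is only a point isomorphism modulo null sets; it preserves nothing topological, and conditions (i)--(iii) are entirely topological (joint continuity of $\boldsymbol{T}$, continuity and vanishing at infinity of $\boldsymbol{t}$, $\boldsymbol{t}^{\boldsymbol{\prime}}$). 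A homeomorphic, measure-preserving identification does not exist, since $[0,+\infty)$ and $\mathbb{R}$ are not homeomorphic. One must instead use the weighted unitary $f\mapsto\sqrt{\phi'}\,(f\circ\phi)$ induced by a diffeomorphism $\phi$ of $\mathbb{R}$ onto $(0,+\infty)$ and then re-verify all of (i)--(iv) for the transformed kernel $\sqrt{\phi'(s)\phi'(t)}\,\boldsymbol{K}(\phi(s),\phi(t))$ --- in particular that the new cut-off operators along the levels $\pm\tau_n$ remain nuclear and that the kernel still vanishes as $s\to-\infty$ (where $\phi(s)\to0^+$, a point at which the original kernel need not vanish); alternatively one reruns the whole construction on $\mathbb{R}$ directly. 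None of this is automatic.

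Second, and more seriously, you cannot have an orthonormal \emph{basis} $\{\psi_n\}$ of $L^2(\mathbb{R})$ ``whose effective supports march off to infinity'': testing completeness against an $f$ supported in a fixed compact set forces $\|f\|^2=\sum_n|\langle f,\psi_n\rangle|^2$ to be carried by finitely many terms, which is absurd unless $L^2$ of that set were finite-dimensional. Hence the ``asymptotic disjointness of the supports'' that you invoke to get uniform convergence of $\sum_{m,n}\langle Sv_n,v_m\rangle\,\psi_m(s)\overline{\psi_n(t)}$ cannot hold for the whole basis, and the convergence claim is unsupported anyway: for a non-compact $S$ the matrix $\bigl(\langle Sv_n,v_m\rangle\bigr)$ is merely bounded, and $\|Sf_n\|\to0$, $\|S^*e_n\|\to0$ hold only along the special orthonormal \emph{sequences}, not along the full basis. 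The construction in \cite{nov:Lon} resolves exactly this tension by writing each basis image as a sum of a part living in a fixed core together with a small bump escaping to infinity, regrouping the double series as $\sum_m\psi_m(s)\,\overline{[US^*v_m](t)}$, and letting the escaping bumps (weighted by the vanishing sequences) rather than decay of the matrix entries control the tails and produce condition (iv). Your sketch names the right ingredients --- the Korotkov-type vanishing sequences for $S$ and $S^*$, the need for simultaneity --- but the mechanism you describe for assembling them is one that provably cannot work, so the real content of the proof is still missing.
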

By virtue of this result, one can confine one's attention (with no loss of generality) to
to second-kind integral equations \eqref{skequ} in which the kernel
$\boldsymbol{T}$ possesses all the properties (i)-(iv).
These four assumptions on $\boldsymbol{T}$ will remain in force for the rest
of the paper, and the notations given in condition (iv) will be used frequently
without warning.
\section{Resolvent $\boldsymbol{K^0}$-Kernels and Their Approximations}
\subsection{Resolvent Kernels for $\boldsymbol{K^0}$-Kernels}
We start with a definition of the resolvent kernel for a $K^0$-kernel,
which is in a sense an alternative to that mentioned in the introduction.
\begin{definition}\label{sf:def} 
Let $\boldsymbol{T}$ be a $K^0$-kernel, let $\lambda$ be a complex number,
and suppose that a $K^0$-kernel, to be denoted by $\boldsymbol{T}_{\mid\lambda}$,
satisfies, for all $s$ and $t$ in $\mathbb{R}$,
the two simultaneous integral equations
\allowdisplaybreaks
\begin{gather}
\boldsymbol{T}_{\mid\lambda}(s,t)-\lambda \int
\boldsymbol{T}(s,x)\boldsymbol{T}_{\mid\lambda}(x,t)\,dx=\boldsymbol{T}(s,t),
\label{eqex}\\
\boldsymbol{T}_{\mid\lambda}(s,t)-\lambda\int
\boldsymbol{T}_{\mid\lambda}(s,x)\boldsymbol{T}(x,t)\,dx=\boldsymbol{T}(s,t),
\label{equn}
\end{gather}
and the condition that, for any $f$ in $L^2$,
\begin{equation}\label{eqbound}
\int\left|\int \boldsymbol{T}_{\mid\lambda}(s,t)f(t)\,dt\right|^2\,ds<\infty.
\end{equation}
Then the $K^0$-kernel $\boldsymbol{T}_{\mid\lambda}$ will be
called the \textit{resolvent kernel} for $\boldsymbol{T}$ at $\lambda$,
and the functions
$\boldsymbol{t}_{\mid\lambda}$,
$\boldsymbol{t}_{\mid\lambda}^{\boldsymbol{\prime}}$
of $C\left(\mathbb{R},L^2\right)$, defined via $\boldsymbol{T}_{\mid\lambda}$ by
$\boldsymbol{t}_{\mid\lambda}(s)=\overline{\boldsymbol{T}_{\mid\lambda}(s,\cdot)}$,
$\boldsymbol{t}^{\boldsymbol{\prime}}_{\mid\lambda}(t)
=\boldsymbol{T}_{\mid\lambda}(\cdot,t)$,
will be called the \textit{resolvent Carleman
functions} for $\boldsymbol{T}$ at $\lambda\in\mathbb{C}$.
\end{definition}
\begin{theorem}\label{rf:ex}
Let $T\in\mathfrak{R}\left(L^2\right)$ be an integral operator, with a kernel
$\boldsymbol{T}$ that is a $K^0$-kernel, and let $\lambda$ be a complex number.
Then {\rm(a)} if $\lambda$ is a regular value for $T$, then the resolvent
kernel for $\boldsymbol{T}$ exists at $\lambda$,
and is a kernel of the Fredholm resolvent of $T$ at $\lambda$, that is,
$\left(T_{\mid\lambda}f\right)(s)=\int\boldsymbol{T}_{\mid\lambda}(s,t)f(t)\,dt$
for every $f$ in $L^2$ and almost every $s$ in $\mathbb{R}$;
{\rm(b)} if the resolvent kernel for $\boldsymbol{T}$  exists at $\lambda$,
then $\lambda$ is a regular value for $T$.
\end{theorem}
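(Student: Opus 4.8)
The plan is to prove (a) by producing the resolvent kernel explicitly from the operator $T_{\mid\lambda}=TR_\lambda(T)=R_\lambda(T)T$, which is available precisely because $\lambda\in\Pi(T)$, and to prove (b) by reading the two integral equations \eqref{eqex}, \eqref{equn} as operator identities that force $I-\lambda T$ to be invertible.

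For (a) I would first apply the Right-Multiplication Lemma (Proposition~\ref{rimlt}) twice. Writing $T_{\mid\lambda}=TR_\lambda(T)$ shows $T_{\mid\lambda}$ is Carleman with associated Carleman function $\boldsymbol{t}_{\mid\lambda}(s)=R_\lambda^*(T)(\boldsymbol{t}(s))$, while writing $T_{\mid\lambda}^*=T^*R_\lambda^*(T)$ shows $T_{\mid\lambda}^*$ is Carleman with associated Carleman function $\boldsymbol{t}^{\boldsymbol{\prime}}_{\mid\lambda}(t)=R_\lambda(T)(\boldsymbol{t}^{\boldsymbol{\prime}}(t))$; hence $T_{\mid\lambda}$ is bi-Carleman. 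Since $\boldsymbol{t},\boldsymbol{t}^{\boldsymbol{\prime}}\in C(\mathbb{R},L^2)$ by (ii), (iii) and $R_\lambda(T),R_\lambda^*(T)$ are bounded, composition with a bounded operator keeps both $\boldsymbol{t}_{\mid\lambda}$ and $\boldsymbol{t}^{\boldsymbol{\prime}}_{\mid\lambda}$ in $C(\mathbb{R},L^2)$, with $\|\boldsymbol{t}_{\mid\lambda}(s)\|\leqslant\|R_\lambda^*(T)\|\,\boldsymbol{\tau}(s)\to0$ at infinity by \eqref{eqnormkf} and similarly for $\boldsymbol{t}^{\boldsymbol{\prime}}_{\mid\lambda}$.

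I would then define the candidate kernel pointwise by
\begin{equation*}
\boldsymbol{T}_{\mid\lambda}(s,t):=\boldsymbol{T}(s,t)+\lambda\langle\boldsymbol{t}^{\boldsymbol{\prime}}_{\mid\lambda}(t),\boldsymbol{t}(s)\rangle,
\end{equation*}
which lies in $C(\mathbb{R}^2,\mathbb{C})$ by the joint continuity of the inner product (using (i) for the first summand and $\boldsymbol{t}^{\boldsymbol{\prime}}_{\mid\lambda},\boldsymbol{t}\in C(\mathbb{R},L^2)$ with a Cauchy--Schwarz bound for vanishing at infinity for the second), giving (i). To identify it with the kernel of $T_{\mid\lambda}$ I would compute its columns and rows: reading $\langle\boldsymbol{t}^{\boldsymbol{\prime}}_{\mid\lambda}(t),\boldsymbol{t}(\cdot)\rangle=T\boldsymbol{t}^{\boldsymbol{\prime}}_{\mid\lambda}(t)$ via \eqref{TfT*f} and using $T\boldsymbol{t}^{\boldsymbol{\prime}}_{\mid\lambda}(t)=T_{\mid\lambda}\boldsymbol{t}^{\boldsymbol{\prime}}(t)$ gives $\boldsymbol{T}_{\mid\lambda}(\cdot,t)=\boldsymbol{t}^{\boldsymbol{\prime}}(t)+\lambda T_{\mid\lambda}\boldsymbol{t}^{\boldsymbol{\prime}}(t)=R_\lambda(T)\boldsymbol{t}^{\boldsymbol{\prime}}(t)=\boldsymbol{t}^{\boldsymbol{\prime}}_{\mid\lambda}(t)$ by \eqref{resFres}, and a symmetric computation gives $\overline{\boldsymbol{T}_{\mid\lambda}(s,\cdot)}=\boldsymbol{t}_{\mid\lambda}(s)$; this secures (ii), (iii), identifies $\boldsymbol{T}_{\mid\lambda}$ as the kernel of $T_{\mid\lambda}$, and makes \eqref{eqbound} immediate since $\int\bigl|\int\boldsymbol{T}_{\mid\lambda}(s,t)f(t)\,dt\bigr|^2\,ds=\|T_{\mid\lambda}f\|^2<\infty$. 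The two integral equations then follow from the multipliability identity \eqref{fkerPQ}: the convolution of $\boldsymbol{T}$ with $\boldsymbol{T}_{\mid\lambda}$ is $\langle\boldsymbol{t}^{\boldsymbol{\prime}}_{\mid\lambda}(t),\boldsymbol{t}(s)\rangle$ (the kernel of $TT_{\mid\lambda}$), yielding \eqref{eqex}, while the convolution of $\boldsymbol{T}_{\mid\lambda}$ with $\boldsymbol{T}$ is $\langle\boldsymbol{t}^{\boldsymbol{\prime}}(t),\boldsymbol{t}_{\mid\lambda}(s)\rangle=\langle\boldsymbol{t}^{\boldsymbol{\prime}}_{\mid\lambda}(t),\boldsymbol{t}(s)\rangle$ (the kernel of $T_{\mid\lambda}T$), yielding \eqref{equn}.

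For (b), let $Q$ be the integral operator induced by the assumed resolvent kernel $\boldsymbol{T}_{\mid\lambda}$; it is bounded by \eqref{eqbound} and the boundedness of integral operators. Read at the operator level, \eqref{eqex} and \eqref{equn} say exactly that $(I-\lambda T)Q=T$ and $Q(I-\lambda T)=T$, so $\lambda TQ=Q-T=\lambda QT$; substituting these into the products gives $(I-\lambda T)(I+\lambda Q)=(I+\lambda Q)(I-\lambda T)=I$, whence $I-\lambda T$ is invertible with $R_\lambda(T)=I+\lambda Q$, i.e. $\lambda\in\Pi(T)$. I expect the main obstacle to lie in the pointwise part of (a): producing a genuinely jointly continuous representative (condition (i) is independent of (ii), (iii)) and verifying \eqref{eqex}, \eqref{equn} for \emph{every} $(s,t)$ rather than merely almost everywhere. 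The inner-product formula for $\boldsymbol{T}_{\mid\lambda}$ together with \eqref{fkerPQ} is tailored to handle exactly this; once the kernel is in hand, the remaining checks in (a) and the whole of (b) are purely algebraic.
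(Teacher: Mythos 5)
Your proposal is correct and follows essentially the same route as the paper: the paper's candidate kernel $\boldsymbol{A}(s,t)=\lambda\langle\boldsymbol{t}^{\boldsymbol{\prime}}(t),\boldsymbol{a}(s)\rangle+\boldsymbol{T}(s,t)$ with $\boldsymbol{a}(s)=R^*_\lambda(T)(\boldsymbol{t}(s))$ coincides with your $\boldsymbol{T}(s,t)+\lambda\langle\boldsymbol{t}^{\boldsymbol{\prime}}_{\mid\lambda}(t),\boldsymbol{t}(s)\rangle$, and your part (b) (bounded integral operator $Q$ from \eqref{eqbound}, operator equalities $(I-\lambda T)Q=Q(I-\lambda T)=T$, hence $(I\pm\dots)$ invertible with inverse $I+\lambda Q$) is the paper's argument verbatim. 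The only cosmetic difference is that you invoke Proposition~\ref{rimlt} up front to get the Carleman functions, where the paper verifies $\int\boldsymbol{A}(s,t)f(t)\,dt=\langle f,\boldsymbol{a}(s)\rangle$ directly.
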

\begin{proof}[\indent Proof]
To prove statement (a), let $\lambda$ be an arbitrary but fixed regular value for $T$
($\lambda\in\Pi(T)$), and define two functions $\boldsymbol{a}$, 
$\boldsymbol{a}^{\boldsymbol{\prime}}\colon\mathbb{R}\to L^2$ by writing
\begin{equation}\label{defas}
\boldsymbol{a}(s)=\left(\bar{\lambda} (T_{\mid\lambda})^*+I\right)(\boldsymbol{t}(s)),
\quad
\boldsymbol{a}^{\boldsymbol{\prime}}(s)=
\left(\lambda T_{\mid\lambda}+I\right)\left(\boldsymbol{t}^
{\boldsymbol{\prime}}(s)\right)
\end{equation}
whenever $s\in\mathbb{R}$.
So defined, $\boldsymbol{a}$ and $\boldsymbol{a}^{\boldsymbol{\prime}}$
then belong to the space $C\left(\mathbb{R},L^2\right)$, as
$\boldsymbol{t}$ and $\boldsymbol{t}^{\boldsymbol{\prime}}$
(the Carleman functions associated to the $K^0$-kernel $\boldsymbol{T}$)
are in $C\left(\mathbb{R},L^2\right)$, and $T_{\mid\lambda}$
(the Fredholm resolvent of $T$ at $\lambda$) is in 
$\mathfrak{R}\left(L^2\right)$.
\par
The functions $\boldsymbol{A}$, 
$\boldsymbol{A}^{\boldsymbol{\prime}}\colon\mathbb{R}^2\to\mathbb{C}$,
given by the formulae
\begin{equation}
\begin{gathered}\label{AAprime}
\boldsymbol{A}(s,t)=\lambda\left\langle\boldsymbol{t}^
{\boldsymbol{\prime}}(t),\boldsymbol{a}(s)\right\rangle+\boldsymbol{T}(s,t),
\\
\boldsymbol{A}^{\boldsymbol{\prime}}(s,t)=\bar{\lambda}\overline{\left\langle
\boldsymbol{a}^{\boldsymbol{\prime}}(s),
\boldsymbol{t}(t)\right\rangle}+\overline{\boldsymbol{T}(t,s)},
\end{gathered}
\end{equation}
then belong to the space $C\left(\mathbb{R}^2,\mathbb{C}\right)$,
due to the continuity of the inner product as a function from $L^2\times L^2$ to
$\mathbb{C}$. By using \eqref{defas} it is also seen from  \eqref{AAprime}
that these functions are conjugate transposes of each other, viz. $\boldsymbol{A}^{\boldsymbol{\prime}}(s,t)=
\overline{\boldsymbol{A}(t,s)}$ for all $s$, $t\in\mathbb{R}$.
Simple manipulations involving formulae \eqref{AAprime}, \eqref{TfT*f},
and \eqref{defas} give rise to the following two strings of equations
being satisfied
at all points $s$ in $\mathbb{R}$ by any function $f$ in $L^2$:
\begin{align*}
\begin{split}
\int\boldsymbol{A}(s,t)f(t)\,dt&=\lambda\int\left\langle\boldsymbol{t}^
{\boldsymbol{\prime}}(t),\boldsymbol{a}(s)\right\rangle f(t)\,dt+
\int\boldsymbol{T}(s,t)f(t)\,dt
\\
&=\left\langle f,\overline\lambda T^*(\boldsymbol{a}(s))+\boldsymbol{t}(s)
\right\rangle=\langle f,\boldsymbol{a}(s)\rangle,
\end{split}\label{Ah}
\\
\begin{split}
\int\boldsymbol{A}^{\boldsymbol{\prime}}(s,t)f(t)\,dt&=
\overline{\lambda}\int\overline{\left\langle
\boldsymbol{a}^{\boldsymbol{\prime}}(s),
\boldsymbol{t}(t)\right\rangle} f(t)\,dt+\int\overline{\boldsymbol{T}(t,s)}f(t)\,dt
\\&=
\left\langle f,\lambda T\left(\boldsymbol{a}^{\boldsymbol{\prime}}(s)\right)+
\boldsymbol{t}^{\boldsymbol{\prime}}(s)\right\rangle=
\left\langle f,\boldsymbol{a}^{\boldsymbol{\prime}}(s)\right\rangle.
\end{split}
\end{align*}
The equality of the extremes of each of these strings
implies that $\overline{\boldsymbol{A}(s,\cdot)}\in\boldsymbol{a}(s)$,
$\boldsymbol{A}(\cdot,s)\in\boldsymbol{a}^{\boldsymbol{\prime}}(s)$
for every fixed $s$ in $\mathbb{R}$.
Furthermore, the following relations hold whenever $f$ is in $L^2$:  
\begin{equation}\label{eqboundv}
\begin{split}
\int\boldsymbol{A}(\cdot,t)f(t)\,dt&=\langle f,\boldsymbol{a}(\cdot)\rangle
=
\left\langle\left(\lambda T_{\mid\lambda}+I\right)f,\boldsymbol{t}(\cdot)\right\rangle
\\&=\left\langle R_\lambda(T) f,\boldsymbol{t}(\cdot)\right\rangle
=\left(TR_\lambda(T)f\right)(\cdot)=\left(T_{\mid\lambda} f\right)(\cdot)\in L^2,
\end{split}
\end{equation}
showing that the Fredholm resolvent $T_{\mid\lambda}$ of $T$ at $\lambda$
is an integral operator on $L^2$, with the function $\boldsymbol{A}$ as its kernel
(compare this with \eqref{eqbound}).
\par
The inner product when written in the integral form and the above observations
about $\boldsymbol{A}$ allow the defining relationships  for
$\boldsymbol{A}$ and $\boldsymbol{A}^{\boldsymbol{\prime}}$ (see \eqref{AAprime}) 
to be respectively written as the integral equations
\begin{equation*}
\begin{gathered}
\boldsymbol{A}(s,t)=\lambda\int\boldsymbol{A}(s,x)\boldsymbol{T}(x,t)\,dx+\boldsymbol{T}(s,t),
\\
\boldsymbol{A}(s,t)=\lambda\int\boldsymbol{T}(s,x)\boldsymbol{A}(x,t)\,dx+\boldsymbol{T}(s,t),
\end{gathered}
\end{equation*}
holding for all $s$, $t\in\mathbb{R}$. Together with \eqref{eqboundv}, these
imply that the $K^0$-kernel $\boldsymbol{A}$ is a resolvent kernel for
$\boldsymbol{T}$ at $\lambda$ (in the sense of Definition~\ref{sf:def}).
\par
To prove statement {\rm(b)}, let there exist a $K^0$-kernel
$\boldsymbol{T}_{\mid\lambda}$ satisfying \eqref{eqex} through \eqref{eqbound}.
It is to be proved that $\lambda$ belongs to $\Pi(T)$, that is, that the operator
$I-\lambda T$ is invertible. To this effect, therefore, remark first that
the integral operator $A$ given by $(Af)(s)=\int\boldsymbol{T}_{\mid\lambda}(s,t)f(t)\,dt$
is bounded from $L^2$ into $L^2$, owing to condition \eqref{eqbound} and to
Banach's Theorem (see \cite[p.\ 14]{Halmos:Sun}). Then, due to the multipliability 
property of $K^0$-kernels (see \eqref{fkerPQ}), the kernel equations
\eqref{eqex} and \eqref{equn} give rise to the operator equalities
$(I-\lambda T)A=T$ and $A(I-\lambda T)=T$, respectively. The latter are easily
seen to be equivalent respectively to the following ones
$(I-\lambda T)(I+\lambda A)=I$ and $(I+\lambda A)(I-\lambda T)=I$,
which together imply that the operator $I-\lambda T$ is invertible with
inverse $I+\lambda A$. The theorem is proved.
\end{proof}
\begin{remark} The proof just given establishes that resolvent kernels in the 
sense of Definition~\ref{sf:def} are in one-to-one correspondence with Fredholm 
resolvents. In view of this correspondence: (1)
$\Pi(T)$ might be defined as the set of all those $\lambda\in\mathbb{C}$ at
which the resolvent kernel in the sense of Definition~\ref{sf:def} exists
(thus, whenever $\boldsymbol{T}_{\mid\lambda}$, $\boldsymbol{t}_{\mid\lambda}$, or
$\boldsymbol{t}^{\boldsymbol{\prime}}_{\mid\lambda}$ appear in what follows,
it may and will always be understood that $\lambda$ belongs to $\Pi(T)$);
(2) the resolvent kernel
$\boldsymbol{T}_{\mid\lambda}$ for the $K^0$-kernel $\boldsymbol{T}$ at
$\lambda$ might as well be defined as that $K^0$-kernel which induces
$T_{\mid\lambda}$, the Fredholm resolvent at $\lambda$ of that integral operator 
$T$ whose kernel is $\boldsymbol{T}$. Using \eqref{eqresf} and \eqref{frimlt}, 
the values of the resolvent Carleman functions for $\boldsymbol{T}$ at each 
fixed regular value $\lambda\in\Pi(T)$ can therefore be ascertained by writing
\begin{equation}\label{carf:resk}
\boldsymbol{t}_{\mid\lambda}(\cdot)=R^*_\lambda(T)(\boldsymbol{t}(\cdot)),\quad
\boldsymbol{t}^{\boldsymbol{\prime}}_{\mid\lambda}(\cdot)=
R_\lambda(T)\left(\boldsymbol{t}^{\boldsymbol{\prime}}(\cdot)\right),
\end{equation}
where $\boldsymbol{t}$ and $\boldsymbol{t}^{\boldsymbol{\prime}}$ are Carleman
functions corresponding to $\boldsymbol{T}$ (compare with \eqref{defas} via 
\eqref{resFres}). The resolvent kernel $\boldsymbol{T}_{\mid\lambda}$ for 
$\boldsymbol{T}$, in its turn, can be exactly recovered from the knowledge of 
the resolvent Carleman functions $\boldsymbol{t}_{\mid\lambda}$ and 
$\boldsymbol{t}^{\boldsymbol{\prime}}_{\mid\lambda}$ by the formulae
\begin{equation}\label{ke1.3.6}
\begin{gathered}
\overline{\boldsymbol{T}_{\mid\lambda}(s,t)}=\bar\lambda
\left\langle\boldsymbol{t}_{\mid\lambda}(s),
\boldsymbol{t}^{\boldsymbol{\prime}}(t)\right\rangle
+\overline{\boldsymbol{T}(s,t)}, 
\\
\boldsymbol{T}_{\mid\lambda}(s,t)=\lambda
\left\langle\boldsymbol{t}^{\boldsymbol{\prime}}_{\mid\lambda}(t),
\boldsymbol{t}(s)\right\rangle+\boldsymbol{T}(s,t),
\end{gathered}
\end{equation}
respectively (compare with \eqref{AAprime}). Formulae
\eqref{carf:resk}-\eqref{ke1.3.6} will be useful in what follows.
\end{remark}
\subsection{Resolvent Kernels for Sub-$\boldsymbol{K^0}$-Kernels}
Here, as subsequently, we shall denote the resolvent kernels at $\lambda$ for the
subkernel $\boldsymbol{T}_n$ (resp., $\boldsymbol{\widetilde{T}}_n$)
by $\boldsymbol{T}_{n\mid\lambda}$ (resp.,
$\boldsymbol{\widetilde{T}}_{n\mid\lambda}$), and the  resolvent
Carleman functions for these subkernels at $\lambda$ by
$\boldsymbol{t}_{n\mid\lambda}$,
$\boldsymbol{t}^{\boldsymbol{\prime}}_{n\mid\lambda}$ (resp.,
$\boldsymbol{\widetilde{t}}_{n\mid\lambda}$,
$\boldsymbol{\widetilde{t}}^{\boldsymbol{\prime}}_{n\mid\lambda}$).
Then the following formulae are none other than
valid versions of \eqref{carf:resk}  
and \eqref{ke1.3.6} for $\boldsymbol{t}_{n\mid\lambda_n}$,
$\boldsymbol{t}^{\boldsymbol{\prime}}_{n\mid\lambda_n}$,
and $\boldsymbol{T}_{n\mid\lambda_n}$,
developed making use of \eqref{kesubcf}:
\begin{gather}
\boldsymbol{t}\sb{n\mid\lambda_n}(s)
=\overline{\boldsymbol{T}\sb{n\mid\lambda_n}(s,\cdot)}=
R^*_{\lambda_n}\left({T}_n\right)
\left(\boldsymbol{t}_n(s)\right)=
\chi_n(s)R^*_{\lambda_n}\left({T}_n\right)
\left(\boldsymbol{t}(s)\right),
\label{ke1.3.4sbk}
\\
\boldsymbol{t}^{\boldsymbol{\prime}}_{n\mid\lambda_n}(t)
=\boldsymbol{T}_{n\mid\lambda_n}(\cdot,t)
=R_{\lambda_n}\left({T}_n\right)
\left(\boldsymbol{t}_n^{\boldsymbol{\prime}}(t)\right)
=R_{\lambda_n}\left({T}_n\right)
P_n\left(\boldsymbol{t}^{\boldsymbol{\prime}}(t)\right),
\label{ke1.3.4sbkpr}
\\
\overline{\boldsymbol{T}_{n\mid\lambda_n}(s,t)}=\bar\lambda_n
\left\langle \boldsymbol{t}_{n\mid\lambda_n}(s),P_n
\left(\boldsymbol{t}^{\boldsymbol{\prime}}(t)\right)\right\rangle
+\overline{\boldsymbol{T}_n(s,t)},
\label{ke1.3.5sbk}\notag
\\
\boldsymbol{T}_{n\mid\lambda_n}(s,t)=\lambda_n\chi_n(s)
\left\langle\boldsymbol{t}^{\boldsymbol{\prime}}_{n\mid\lambda_n}(t),
\boldsymbol{t}(s)\right\rangle+\boldsymbol{T}_n(s,t),
\label{ke1.3.6sbk}
\end{gather}
for all $s$, $t\in\mathbb{R}$. It is readily seen from \eqref{ke1.3.3} and \eqref{ke1.3.6sbk}  that each $K^0$-kernel
$\boldsymbol{T}_{n\mid\lambda}$ has compact $s$-support (namely, lying in
$[-\tau_n,\tau_n]$),
so the condition \eqref{eqbound} of Definition~\ref{sf:def}
is automatically satisfied with $\boldsymbol{T}_{n\mid\lambda}$
in the role of $\boldsymbol{T}_{\mid\lambda}$.
Thus, $\boldsymbol{T}_{n\mid\lambda}$
is the only solution of the simultaneous integral equations \eqref{eqex} and \eqref{equn}
(with $\boldsymbol{T}$ replaced by $\boldsymbol{T}_n$) which is a $K^0$--kernel.
The problem of explicitly finding  that solution in terms of
$\boldsymbol{T}_n$ is completely solved via the Fredholm-determinant method,
as follows.
For $\boldsymbol{T}_n$ a subkernel of $\boldsymbol{T}$,
consider its Fredholm determinant $D_{\boldsymbol{T}_n}(\lambda)$ defined
by the series
\begin{equation}\label{Frdet}
D_{\boldsymbol{T}_n}(\lambda):=1+
             \sum_{m=1}^\infty\frac{(-\lambda)^m}{m!}
\int\dots\int
\boldsymbol{T}_n\begin{pmatrix}
x_{1}&\hdots&x_{m}\\
x_{1}&\hdots&x_{m}\end{pmatrix} dx_1\dots\,dx_m,
\end{equation}
for every $\lambda\in\mathbb{C}$, and its first Fredholm minor
$D_{\boldsymbol{T}_n}(s,t\mid\lambda)$ defined by the series
\begin{equation}\label{Frmin}
D_{\boldsymbol{T}_n}(s,t\mid\lambda)=\boldsymbol{T}_n(s,t)
+\sum_{m=1}^\infty\frac{(-\lambda)^m}{m!}
\int\dots\int\boldsymbol{T}_n\begin{pmatrix}
                      s&x_1&\hdots&x_m\\
                      t&x_1&\hdots&x_m\end{pmatrix}\,
dx_1\dots\,dx_m,
\end{equation}
for all points $s$, $t\in\mathbb{R}$ and for every $\lambda\in\mathbb{C}$,
where
\begin{equation*}
\boldsymbol{T}_n\begin{pmatrix}
x_1&\hdots&x_\nu\\
y_1&\hdots&y_\nu
\end{pmatrix}:=
                                    \det\begin{pmatrix}
                            \boldsymbol{T}_n(x_1,y_1)&\hdots&\boldsymbol{T}_n(x_1,y_\nu)\\
                            \hdotsfor{3}\\
                            \boldsymbol{T}_n(x_\nu,y_1)&\hdots&\boldsymbol{T}_n(x_\nu,y_\nu)\end{pmatrix}.
\end{equation*}
The next proposition can be inferred from results of the
Carleman-Mikhlin-Smithies theory  of the Fredholm determinant and the first Fredholm minor
for Hilbert-Schmidt kernels of possibly unbounded support
(see \cite{Carl:pap}, \cite{Mih:pap}, and \cite{Smith:paper}).
\begin{proposition}\label{FredTh} Let $\lambda\in\mathbb{C}$ be arbitrary but fixed.
Then
\par
1) the series of \eqref{Frdet} is absolutely convergent in $\mathbb{C}$, and
the series of \eqref{Frmin} is absolutely convergent in
$C\left(\mathbb{R}^{2},\mathbb{C}\right)$
and in $L^2\left(\mathbb{R}^2\right)$;
\par
2) if $D_{\boldsymbol{T}_n}(\lambda)\ne 0$ then
resolvent kernel for $\boldsymbol{T}_n$ at $\lambda$ exists and is
the quotient of the first Fredholm minor and the Fredholm determinant:
\begin{equation}\label{ke1.3.7}
\boldsymbol{T}_{n\mid\lambda}(s,t)\equiv\frac
{D_{\boldsymbol{T}_n}(s,t\mid\lambda)}
{D_{\boldsymbol{T}_n}(\lambda)};
\end{equation}
\par
3) if $D_{\boldsymbol{T}_n}(\lambda)=0$, then the resolvent kernel for
$\boldsymbol{T}_n$ does not exist at $\lambda$.
\end{proposition}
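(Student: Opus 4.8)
The plan is to verify that each $\boldsymbol{T}_n$ satisfies the hypotheses for which the Carleman--Mikhlin--Smithies theory of \cite{Carl:pap}, \cite{Mih:pap}, \cite{Smith:paper} is set up, and then to combine that theory with Theorem~\ref{rf:ex}. The hypotheses to confirm are that $\boldsymbol{T}_n$ is continuous (immediate, since $\boldsymbol{T}_n$ is a $K^0$-kernel), that it is Hilbert--Schmidt (this is \eqref{ke1.3.3gsch}), and that its diagonal $\boldsymbol{T}_n(x,x)=\chi_n(x)\boldsymbol{T}(x,x)$ is a bounded, continuous, compactly supported---hence integrable---function, so that the multiple integrals in \eqref{Frdet} and \eqref{Frmin} are well defined. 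I would also record that the induced operator $T_n=P_nT$ of \eqref{TnTn} is nuclear, and in particular compact, which will be what activates the Fredholm alternative in the last step.

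For assertion 1, the terms would be estimated by Hadamard's inequality, the key structural remark being that the factor $\chi_n(x_i)$ annihilates the $i$-th row of $\det[\boldsymbol{T}_n(x_i,x_j)]$ whenever $x_i\notin\mathbb{I}_n$; thus every integral in \eqref{Frdet} collapses to one over the compact cube $\mathbb{I}_n^m$, on which $|\boldsymbol{T}_n|\le M_n:=\|\boldsymbol{T}\|_{C(\mathbb{R}^2,\mathbb{C})}$. Hadamard then bounds the $m$-th term by $\tfrac{|\lambda|^m}{m!}\,m^{m/2}M_n^m(2\tau_n)^m$, which is summable for every $\lambda$, so the series of \eqref{Frdet} converges absolutely and represents an entire function of $\lambda$. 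Applying the same inequality to the $(m+1)\times(m+1)$ determinants of \eqref{Frmin}, together with the uniform bound $|\boldsymbol{T}_n(s,t)|\le M_n$, produces a bound on the $m$-th term that is uniform in $(s,t)$ and summable, which gives absolute convergence of the minor in $C(\mathbb{R}^2,\mathbb{C})$.

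The delicate point---and the reason the full \cite{Smith:paper}-type theory for kernels of \emph{unbounded} support is invoked rather than the classical theory on a compact square---is the $L^2(\mathbb{R}^2)$ convergence of the minor \eqref{Frmin}. The leading factor $\chi_n(s)$ confines the $s$-variable to $\mathbb{I}_n$, but $t$ still runs over all of $\mathbb{R}$, so the uniform bound of the previous step does not by itself yield square-integrability in $t$. Here I would lean on the Hilbert--Schmidt $L^2$-convergence results of the cited authors, the requisite decay in $t$ ultimately stemming from $\boldsymbol{t}^{\boldsymbol{\prime}}\in C(\mathbb{R},L^2)$; this is the step I expect to be the main obstacle.

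Assertions 2 and 3 would then follow by matching the analytic object with the operator one. The Carleman--Mikhlin--Smithies identity gives $D_{\boldsymbol{T}_n}(\lambda)=\det(I-\lambda T_n)$, so, $T_n$ being compact, the Fredholm alternative yields $D_{\boldsymbol{T}_n}(\lambda)\ne0$ exactly when $I-\lambda T_n$ is invertible, i.e.\ when $\lambda\in\Pi(T_n)$. In that case Theorem~\ref{rf:ex}(a) supplies a resolvent kernel $\boldsymbol{T}_{n\mid\lambda}$, while the classical Fredholm relations show the quotient $D_{\boldsymbol{T}_n}(s,t\mid\lambda)/D_{\boldsymbol{T}_n}(\lambda)$ to satisfy \eqref{eqex} and \eqref{equn} (with $\boldsymbol{T}_n$ in place of $\boldsymbol{T}$) as well as \eqref{eqbound}; by the uniqueness noted after Definition~\ref{sf:def} the two coincide, which is \eqref{ke1.3.7}. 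If instead $D_{\boldsymbol{T}_n}(\lambda)=0$, then $\lambda\notin\Pi(T_n)$, and the contrapositive of Theorem~\ref{rf:ex}(b) shows no resolvent kernel for $\boldsymbol{T}_n$ exists at $\lambda$, establishing assertion 3.
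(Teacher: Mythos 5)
Your proposal is correct and follows the same route as the paper, which offers no proof of Proposition~\ref{FredTh} beyond stating that it ``can be inferred from results of the Carleman--Mikhlin--Smithies theory'' of \cite{Carl:pap}, \cite{Mih:pap}, \cite{Smith:paper}; you simply make that inference explicit (verification of the hypotheses, Hadamard estimates on $\mathbb{I}_n^m$, the determinant--invertibility correspondence, and the matching with Theorem~\ref{rf:ex} via the uniqueness of the $K^0$-resolvent kernel). Your sketch, including the honest flagging of the $L^2(\mathbb{R}^2)$ convergence of the minor as the point genuinely requiring the unbounded-support theory, is a faithful expansion of the citation rather than a different argument.
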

For each $n\in\mathbb{N}$, therefore, the characteristic set $\Lambda({T}_n)$
is composed of all the zeros of the entire function $D_{\boldsymbol{T}_n}(\lambda)$, and
is an at most denumerable set clustering at $\infty$.
The Fredholm representation (like \eqref{ke1.3.7}) for $\boldsymbol{\widetilde{T}}_{n\mid\lambda_n}$
is built up in the same way but replacing $\boldsymbol{T}_n$ by
$\boldsymbol{\widetilde{T}}_n$. Since $\widetilde{T}_n^m={T}_n^mP_n$ for
$m\in\mathbb{N}$, the $m$-th iterants of $\boldsymbol{\widetilde{T}}_n$ and
$\boldsymbol{T}_n$ (see \eqref{iterant}) stand therefore in a similar
relation to each other, namely: 
$\boldsymbol{\widetilde{T}}^{[m]}_n(s,t)=
\chi_n(t)\boldsymbol{T}^{[m]}_n(s,t)$ for all $s$, $t\in\mathbb{R}$.
Then it follows from the rules for calculating the coefficients of powers of $\lambda$ in the
Fredholm series (see \eqref{Frdet}, \eqref{Frmin}) that
$D_{\boldsymbol{\widetilde{T}}_n}(\lambda)
\equiv D_{\boldsymbol{T}_n}(\lambda)$,
$D_{\boldsymbol{\widetilde{T}}_n}(s,t\mid\lambda)
\equiv
\chi_n(t)D_{\boldsymbol{T}_n}(s,t\mid\lambda)$.
Hence, for each $n$,
\begin{gather}
\label{ke1.3.8}
\Lambda(T_n)=\Lambda(\widetilde{T}_n),\quad\Pi(T_n)=\Pi(\widetilde{T}_n),
\\
\label{ke1.3.9}
\boldsymbol{\widetilde{T}}_{n\mid\lambda_n}(s,t)
=\frac
{D_{\boldsymbol{\widetilde{T}}_n}(s,t\mid\lambda_n)}
{D_{\boldsymbol{\widetilde{T}}_n}(\lambda_n)}
=\chi_n(t)\boldsymbol{\boldsymbol{T}}_{n\mid\lambda_n}(s,t)
\quad (\lambda_n\in\Pi(T_n)),
\\
\label{ke1.3.10}
\boldsymbol{\widetilde{t}}_{n\mid\lambda_n}(s)
=P_n\left(\boldsymbol{t}_{n\mid\lambda_n}(s)\right),\quad
\boldsymbol{\widetilde{t}}^{\boldsymbol{\prime}}_{n\mid\lambda_n}(t)
=\chi_n(t)\boldsymbol{t}^{\boldsymbol{\prime}}_{n\mid\lambda_n}(t).
\end{gather}

\subsection{The Main Result}
Given an arbitrary sequence $\{\lambda_n\}_{n=1}^\infty$ of complex numbers
satisfying $\lambda_n\in\Pi(T_n)$ for each $n$ and converging to some
$\lambda\in\mathbb{C}$,
the $C\left(\mathbb{R}^2,\mathbb{C}\right)$-valued sequence of the resolvent kernels
\begin{equation}
\label{eq2.3}
\left\{\boldsymbol{T}_{n\mid\lambda_n}\right\}_{n=1}^\infty,
\end{equation}
all of whose terms are known explicitly in terms of the original $K^0$-kernel
$\boldsymbol{T}$ via the Fredholm formulae
\eqref{Frdet}-\eqref{ke1.3.7}, and the $C\left(\mathbb{R},L^2\right)$-valued
sequences of the respective Carleman functions
\begin{equation}
\label{eq2.1I}
\left\{\boldsymbol{t}_{n\mid\lambda_n}\right\}_{n=1}^\infty,\quad
\{\boldsymbol{t}_{n\mid\lambda_n}^{\boldsymbol{\prime}}\}_{n=1}^\infty
\end{equation}
are not known to converge in general. If they do converge, relevant questions would be, e.g.:
if the sequence \eqref{eq2.3} converges in $C\left(\mathbb{R}^2,\mathbb{C}\right)$,
possibly up to the extraction of a subsequence, to a function $\boldsymbol{A}$
say, whether $\lambda$ is necessarily a regular value for $T$,
and if $\lambda$ turns out to belong to $\Pi(T)$, whether
$\boldsymbol{A}=\boldsymbol{T}_{\mid\lambda}$. Similar questions can be asked
concerning the sequences of \eqref{eq2.1I}, but we postpone them all to
a later paper. The (in a sense converse) question we deal with in this paper
is: given that the above $\lambda$ is a (nonzero) regular value for $T$, what
further connections between $\{\lambda_n\}$ and $\lambda$ guarantee the
existence, in suitable senses, of the limit-relations
\begin{equation*} 
\boldsymbol{t}_{\mid\lambda}=\lim_{n\to\infty}\boldsymbol{t}_{n\mid\lambda_n},\quad
\boldsymbol{t}_{\mid\lambda}^{\boldsymbol{\prime}}=\lim_{n\to\infty}\boldsymbol{t}_{n\mid\lambda_n}^{\boldsymbol{\prime}},\quad
\boldsymbol{T}_{\mid\lambda}=\lim_{n\to\infty}\boldsymbol{T}_{n\mid\lambda_n}.
\end{equation*}
In the theorem which follows, we characterize one such connection
by means of sets such as $\nabla_\mathfrak{s}(\cdot)$, defined at the end of Subsection~\ref{freres}.
\begin{theorem}\label{thmeq3.1}
Let $\left\{\beta_n\right\}_{n=1}^\infty$ be an arbitrary sequence of complex
numbers satisfying
\begin{equation}\label{eq3.1}
\lim_{n\to\infty}\beta_n=0,
\end{equation}
and define $\lambda_n(\lambda):=\lambda(1-\beta_n\lambda)^{-1}$,
so that one can consider that $\lambda_n(\lambda)\to\lambda$ when $n\to\infty$ for each fixed
$\lambda\in\mathbb{C}$. Then
$\varnothing\not=\nabla_\mathfrak{s}(\{\beta_n I+T_n\})\subseteq
\nabla_\mathfrak{s}(\{\beta_n I+\widetilde{T}_n\})\subseteq\Pi(T)$
and the following limits hold:
\begin{gather}
\boldsymbol{t}^{\boldsymbol{\prime}}_{\mid\lambda}(t)=
\lim_{n\to\infty}
\boldsymbol{{t}}_{n\mid\lambda_n(\lambda)}^{\boldsymbol{\prime}}(t)
\quad(\lambda\in\nabla_\mathfrak{s}(\{\beta_n I+\widetilde{T}_n\}),\ t\in\mathbb{R}),
\label{eq3.3}
\\
\boldsymbol{t}_{\mid\lambda}(s)=
\lim_{n\to\infty}\boldsymbol{{t}}_{n\mid\lambda_n(\lambda)}(s)
\quad(\lambda\in\nabla_\mathfrak{s}(\{\beta_n I+T_n\}),\ s\in\mathbb{R}),
\label{eq3.5}
\\
\boldsymbol{T}_{\mid\lambda}(s,t)=
\lim_{n\to\infty}\boldsymbol{{T}}_{n\mid\lambda_n(\lambda)}(s,t)
\quad(\lambda\in\nabla_\mathfrak{s}(\{\beta_n I+\widetilde{T}_n\}),\ (s,t)\in\mathbb{R}^2),
\label{eq3.4}
\end{gather}
where:
\par
{\rm(a)} the convergence in \eqref{eq3.3} is in the
$C\left(\mathbb{R},L^2\right)$ norm for each fixed
$\lambda\in\nabla_\mathfrak{s}(\{\beta_n I+\widetilde{T}_n\})$
(see \eqref{eq3.3d}), and is uniform in $\lambda$ on every compact subset
$\widetilde{\mathfrak{K}}$ of $\nabla_\mathfrak{s}(\{\beta_n I+\widetilde{T}_n\})$ for each fixed $t\in\mathbb{R}$
(see \eqref{eq3.6d});
\par
{\rm(b)} the convergence in \eqref{eq3.5} is in the $C\left(\mathbb{R},L^2\right)$
norm for each fixed $\lambda\in\nabla_\mathfrak{s}(\{\beta_n I+T_n\})$
(see \eqref{eq3.5d}), and is uniform in $\lambda$ on every compact subset $\mathfrak{K}$ of
$\nabla_\mathfrak{s}(\{\beta_n I+T_n\})$ for each fixed $s\in\mathbb{R}$
(see \eqref{eq3.8d}); and
\par
{\rm(c)} the convergence in \eqref{eq3.4} is in the
$C\left(\mathbb{R}^2,\mathbb{C}\right)$ norm for each fixed
$\lambda\in\nabla_\mathfrak{s}(\{\beta_n I+\widetilde{T}_n\})$
(see \eqref{eq3.4d}), and is
uniform in $\lambda$ on every compact subset $\widetilde{\mathfrak{K}}$ of
$\nabla_\mathfrak{s}(\{\beta_n I+\widetilde{T}_n\})$ for each fixed $(s,t)\in\mathbb{R}^2$
(see \eqref{eq3.7d}).
\end{theorem}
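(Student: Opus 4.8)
The plan is to reduce everything to Kato's strong‑resolvent‑convergence theory (via Remark~\ref{1zremark}) for the two auxiliary sequences $A_n:=\beta_n I+T_n$ and $B_n:=\beta_n I+\widetilde T_n$, the substitution $\lambda_n(\lambda)$ being designed precisely to make this reduction algebraic. Factoring $(1-\beta_n\lambda)$ out of $I-\lambda A_n=(1-\beta_n\lambda)I-\lambda T_n$, and using the operator form $\widetilde T_{n\mid\mu}=T_{n\mid\mu}P_n$ of \eqref{ke1.3.9} (whence $R_\mu(\widetilde T_n)=I-P_n+R_\mu(T_n)P_n$), I get
\[
R_\lambda(A_n)=\frac{R_{\lambda_n(\lambda)}(T_n)}{1-\beta_n\lambda},\qquad
R_\lambda(B_n)=\frac{1}{1-\beta_n\lambda}(I-P_n)+R_\lambda(A_n)P_n .
\]
Because $\beta_n\to0$ and \eqref{eqTntoT} holds, all four sequences $A_n,B_n,A_n^*,B_n^*$ converge strongly (to $T,T,T^*,T^*$) and are uniformly norm‑bounded; the existence of the sub‑resolvent kernels for large $n$ is guaranteed by $\lambda\in\nabla_\mathfrak{b}$.

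First I would settle the set relations. Nonemptiness of $\nabla_\mathfrak{s}(\{A_n\})$ is immediate: for $0<|\zeta|<(\sup_n\|A_n\|)^{-1}$ the Neumann series \eqref{resseries} converges uniformly in $n$, so $R_\zeta(A_n)\to R_\zeta(T)$ in norm and $\{A_{n\mid\zeta}\}$ is bounded. The inclusion $\nabla_\mathfrak{s}(\{A_n\})\subseteq\nabla_\mathfrak{s}(\{B_n\})$ follows by feeding the sandwich formula a fixed $f$, since $(I-P_n)f\to0$ and $R_\zeta(A_n)P_nf=R_\zeta(A_n)(P_n-I)f+R_\zeta(A_n)f$ converges by the uniform bound on $R_\zeta(A_n)$. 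For $\nabla_\mathfrak{s}(\{B_n\})\subseteq\Pi(T)$ I would pass to the strong limit in $(I-\lambda B_n)R_\lambda(B_n)=R_\lambda(B_n)(I-\lambda B_n)=I$: writing $R$ for the strong limit of $R_\lambda(B_n)$ and using $\|B_n\|$ bounded, $B_n\to T$ strongly, and the uniform bound to dispose of the varying vectors, one obtains $(I-\lambda T)R=R(I-\lambda T)=I$, i.e. $\lambda\in\Pi(T)$ and $R=R_\lambda(T)$.

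The analytic heart is upgrading membership in these sets to strong convergence of the resolvents \emph{and of their adjoints}. With $\lambda\in\Pi(T)$ now available, the second resolvent equation \eqref{secResEq} in the form $R_\lambda(B_n)-R_\lambda(T)=\lambda R_\lambda(B_n)(B_n-T)R_\lambda(T)$ gives $R_\lambda(B_n)\to R_\lambda(T)$ strongly, since $(B_n-T)R_\lambda(T)f\to0$ for fixed $f$ while $R_\lambda(B_n)$ is uniformly bounded. The delicate point is $\boldsymbol t_{\mid\lambda}$, which by \eqref{ke1.3.4sbk} involves the \emph{adjoint} resolvents $R^*_{\lambda_n(\lambda)}(T_n)=\overline{(1-\beta_n\lambda)}\,R^*_\lambda(A_n)$, for which strong convergence of $R_\lambda(A_n)$ alone is insufficient. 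I would get it from the adjoint of the companion identity, $R^*_\lambda(A_n)-R^*_\lambda(T)=\overline\lambda\,R^*_\lambda(A_n)(A_n^*-T^*)R^*_\lambda(T)$, which converges strongly precisely because $A_n^*\to T^*$ strongly and $\|R^*_\lambda(A_n)\|=\|R_\lambda(A_n)\|$ is bounded. This is the main obstacle, and it is exactly what forces \eqref{eq3.5} to be stated over the smaller set $\nabla_\mathfrak{s}(\{A_n\})$: on it we control $\{R_\lambda(A_n)\}$, hence $\{R^*_\lambda(A_n)\}$, whereas on $\nabla_\mathfrak{s}(\{B_n\})$ we do not.

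Finally I would read off the three limits from \eqref{ke1.3.4sbk}, \eqref{ke1.3.4sbkpr}, \eqref{ke1.3.6sbk}. For \eqref{eq3.3}, $\boldsymbol t^{\boldsymbol\prime}_{n\mid\lambda_n(\lambda)}=R_{\lambda_n(\lambda)}(T_n)P_n\boldsymbol t^{\boldsymbol\prime}=\bigl(R_{\lambda_n(\lambda)}(\widetilde T_n)-(I-P_n)\bigr)\boldsymbol t^{\boldsymbol\prime}\to R_\lambda(T)\boldsymbol t^{\boldsymbol\prime}=\boldsymbol t^{\boldsymbol\prime}_{\mid\lambda}$ by \eqref{carf:resk}, and Lemma~\ref{lemKato} applied to the relatively compact set $\boldsymbol t^{\boldsymbol\prime}(\mathbb R)$ of \eqref{precom} turns pointwise‑in‑$t$ convergence into $C(\mathbb R,L^2)$ convergence as in \eqref{Katolem}. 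For \eqref{eq3.5} the argument is the same through $R^*_\lambda(A_n)$, the only novelty being the factor $\chi_n(s)$: splitting $\chi_n(s)R^*_{\lambda_n(\lambda)}(T_n)\boldsymbol t(s)-R^*_\lambda(T)\boldsymbol t(s)$ into a Kato‑uniform term plus $(\chi_n(s)-1)\boldsymbol t_{\mid\lambda}(s)$, the latter is killed by the vanishing at infinity of $\boldsymbol t_{\mid\lambda}\in C(\mathbb R,L^2)$. Then \eqref{eq3.4} is a corollary: subtracting \eqref{ke1.3.6sbk} from \eqref{ke1.3.6} and estimating with $\|\boldsymbol t\|_{C(\mathbb R,L^2)}<\infty$, the just‑proved \eqref{eq3.3}, $\lambda_n(\lambda)\to\lambda$, $\boldsymbol T_n\to\boldsymbol T$ in $C(\mathbb R^2,\mathbb C)$ from \eqref{ke1.3.1}, and the vanishing at infinity of $\boldsymbol t$ and $\boldsymbol T_{\mid\lambda}$ to absorb the $\chi_n$‑tails. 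The uniformity in $\lambda$ on compact subsets in (a)--(c) comes from the fact, part of the theory referenced in Remark~\ref{1zremark}, that resolvent convergence on the strong‑convergence region is uniform on compact $\lambda$‑subsets for each fixed vector.
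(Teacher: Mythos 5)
Your proposal follows essentially the same route as the paper's proof: reduction to Kato's generalized strong convergence for $\{\beta_nI+T_n\}$ and $\{\beta_nI+\widetilde T_n\}$, the algebraic identities tying $R_\lambda(\beta_nI+T_n)$ to $R_{\lambda_n(\lambda)}(T_n)$ and, via $P_n$, to $R_\lambda(\beta_nI+\widetilde T_n)$, the second resolvent equation together with $T_n^*\to T^*$ strongly to handle the adjoint resolvents needed for $\boldsymbol{t}_{\mid\lambda}$, and Lemma~\ref{lemKato} on the relatively compact sets $\boldsymbol{t}(\mathbb{R})$, $\boldsymbol{t}^{\boldsymbol{\prime}}(\mathbb{R})$ to upgrade pointwise convergence to $C\left(\mathbb{R},L^2\right)$ and $C\left(\mathbb{R}^2,\mathbb{C}\right)$ convergence. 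Two cosmetic points: the Neumann series argument yields $R_\zeta(\beta_nI+T_n)\to R_\zeta(T)$ strongly rather than in norm (which is all you need), and the uniformity in $\lambda$ on compacta for the \emph{adjoint} resolvents is not literally ``part of the theory'' but requires the paper's extra step of applying Lemma~\ref{lemKato} to the relatively compact set $\bigcup_{\lambda\in\mathfrak{K}}R^*_\lambda(T)f$ --- a step your earlier identity already sets up.
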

\begin{proof}[\indent Proof]
Let us begin by collecting (mainly from \cite{Kato:book})
some preparatory results, to be numbered below from \eqref{eq3.4new} to
\eqref{eq3.21}.
To simplify the notation, write
${A}_n:=\beta_n I+{T}_n$, $\widetilde{A}_n:=\beta_n I+\widetilde{T}_n$.
Choose a (non-zero) regular value $\zeta\in\Pi(T)$ satisfying
$ \left|\zeta\right|\left\|T\right\|<1$, and hence satisfying
for some $N(\zeta)>0$ the inequality
\begin{equation}\label{eqd3.5}
\left|\zeta\right|\left\|{A}_n\right\|\leqslant \left|\zeta\right|\left(\max_{n>N(\zeta)}\left|\beta_n
                        \right|+\left\|T\right\|\right)<1
\quad\text{for all $n>N(\zeta)$}.
\end{equation}
Then $\zeta$ does belong to $\nabla_\mathfrak{b}(\{A_n\})$,
because 
\begin{equation*}
\left\|{A}_{n\mid\zeta}\right\|\leqslant
                            \frac{\left\|{A}_n\right\|}
                                 {1-\left|\zeta\right|\left\|{A}_n\right\|}
\leqslant M(\zeta)=\frac{\max\limits_{n>N(\zeta)}\left\|{A}_n\right\|}{1-\left|\zeta\right|
\left(\max\limits_{n>N(\zeta)}
\left|\beta_n\right|
                        +\left\|T\right\|\right)}
\quad\text{for all $n>N(\zeta)$} 
\end{equation*}
(cf.~\eqref{eq3.2}).
The result is that the intersection of $\nabla_\mathfrak{b}(\{A_n\})$
and $\Pi(T)$ is non-void. Similarly it can be shown that 
$\nabla_\mathfrak{b}(\{\widetilde{A}_n\})\cap\Pi(T)\not=\varnothing$.
Therefore, since, because of \eqref{eq3.1} and \eqref{eqTntoT}, the sequences
$\{{A}_n\}$ and $\{\widetilde{A}_n\}$ both converge to $T$ in the strong operator topology,
it follows by the criterion for generalized strong convergence 
(see \cite[Theorem~VIII-1.5]{Kato:book}) that
\begin{gather}
\nabla_\mathfrak{s}(\{A_n\})=\nabla_\mathfrak{b}(\{A_n\})\cap\Pi(T), \quad
\nabla_\mathfrak{s}(\{\widetilde{A}_n\})=\nabla_\mathfrak{b}(\{\widetilde{A}_n\})\cap\Pi(T),\label{eq3.4new}\\
\lim_{n\to\infty}\left\|\left({A}_{n\mid\lambda}-T_{\mid\lambda}\right)f\right\|=0
\quad\text{for all $\lambda\in\nabla_\mathfrak{s}(\{A_n\})$ and $f\in L^2$,}\label{eqd3.6} \notag\\
\lim_{n\to\infty}\left\|\left(\widetilde{A}_{n\mid\lambda}-T_{\mid\lambda}\right)f\right\|=0
\quad\text{for all $\lambda\in\nabla_\mathfrak{s}(\{\widetilde{A}_n\})$ and
$f\in L^2$.}\label{eqd3.8}
\end{gather}
\par
Further, given a $\lambda\in\nabla_\mathfrak{b}(\{A_n\})\cup\nabla_\mathfrak{b}(\{\widetilde{A}_n\})$,
the following formulae hold for sufficiently large $n$:
\begin{equation}\label{eq3.26}
\begin{gathered}
R_\lambda({A}_n)=
\frac{1}{1-\beta_n\lambda}R_{\lambda_n(\lambda)}({T}_n), \quad
R_\lambda(\widetilde{A}_n)=\frac{1}{1-\beta_n\lambda}
R_{\lambda_n(\lambda)}(\widetilde{T}_n),
\\
R_{\lambda_n(\lambda)}({T}_n)=
(1-\beta_n\lambda)\left(I+\lambda{A}_{n\mid\lambda}\right)
=
I+\lambda\beta_n I +\lambda{A}_{n\mid\lambda}+
\lambda^2\beta_n{A}_{n\mid\lambda},
\\
{A}_{n\mid\lambda}
=\left(\frac{1}{1-\beta_n\lambda}\right)^2{T}_{n\mid\lambda_n(\lambda)}
+\frac{\beta_n}{1-\beta_n\lambda} I,
\\
\widetilde{A}_{n\mid\lambda}=\left(\frac{1}{1-\beta_n\lambda}\right)^2\widetilde{T}_{n\mid\lambda_n(\lambda)}
+\frac{\beta_n}{1-\beta_n\lambda} I.
\end{gathered}
\end{equation}
These are obtained by a purely formal calculation, and use that fact that
$\Pi(\widetilde{T}_n)=\Pi({T}_n)$ for each fixed $n\in\mathbb{N}$
(see \eqref{ke1.3.8}).
The equations in the last two lines combine to give, using \eqref{ke1.3.9},
\begin{equation}\label{eqd3.10}
{A}_{n\mid\lambda}P_n=\widetilde{A}_{n\mid\lambda}
+\frac{\beta_n}{1-\beta_n\lambda}(I-P_n).
\end{equation}
This implies in particular that
$\left\|\widetilde{A}_{n\mid\lambda}\right\|\leqslant\left\|{A}_{n\mid\lambda}\right\|
+|\beta_n||1-\beta_n\lambda|^{-1}$,
whence \eqref{eq3.1} leads to the inclusion relation
$\nabla_\mathfrak{b}(\{A_n\})\subseteq\nabla_\mathfrak{b}(\{\widetilde{A}_n\})$,
from which it follows via \eqref{eq3.4new} that
$\varnothing\not=\nabla_\mathfrak{s}(\{A_n\})\subseteq\nabla_\mathfrak{s}(\{\widetilde{A}_n\})\subseteq\Pi(T)$,
as asserted.
\par
In what follows, let $\widetilde{\mathfrak{K}}$  denote a compact subset of
$\nabla_\mathfrak{s}(\{\widetilde{A}_n\})$. Then, according to  Theorem~VIII-1.1 
in \cite{Kato:book} there exists a positive constant $M(\widetilde{\mathfrak{K}})$ 
such that
\begin{equation}\label{eqd3.16}
\sup_{\lambda\in\widetilde{\mathfrak{K}}}\left\|\widetilde{A}_{n\mid\lambda}\right\|\leqslant M(\widetilde{\mathfrak{K}})
\quad\text{for all sufficiently large $n$},
\end{equation}
and, according to Theorem~VIII-1.2 therein,
the convergence in \eqref{eqd3.8} is uniform over $\widetilde{\mathfrak{K}}$:
\begin{equation}\label{eq3.17}
\lim_{n\to\infty}\sup_{\lambda\in\widetilde{\mathfrak{K}} }
\left\|\left(\widetilde{A}_{n\mid\lambda}-T_{\mid\lambda}\right)f\right\|=0\quad
\text{for each fixed $f\in L^2$.}
\end{equation}
Now use \eqref{eqd3.16}, \eqref{eq3.17}, and the observation from
\eqref{eq3.1} that
\begin{equation}\label{eqesbet}
\sup\limits_{\lambda\in\widetilde{\mathfrak{K}}}\left|\frac{\beta_n}{1-\beta_n\lambda}\right|
\leqslant \frac{\left|\beta_n\right|}
{1-\left|\beta_n\right|\sup\limits_{\lambda\in\widetilde{\mathfrak{K}}}\left|\lambda\right|}\to0
\quad\text{as $n\to\infty$},
\end{equation}
to infer, via the connecting formula \eqref{eqd3.10}, that
\begin{gather}
\lim_{n\to\infty}\sup_{\lambda\in\widetilde{\mathfrak{K}}}
\left\|\left({A}_{n\mid\lambda}P_n-T_{\mid\lambda}\right)f\right\|=0\quad
\text{for each fixed $f\in L^2$,}\label{eq3.19}
\\
\sup_{\lambda\in\widetilde{\mathfrak{K}}}\left\|{A}_{n\mid\lambda}P_n\right\|<M(\widetilde{\mathfrak{K}})+1
\quad\text{for all sufficiently large $n$.}
\label{eqadsup}
\end{gather}
\par
Throughout what follows let $\mathfrak{K}$ denote a compact subset of
$\nabla_\mathfrak{s}(\{A_n\})$. Then Theorem~VIII-1.1 in \cite{Kato:book}, this time
applied to the operator sequence $\{{A}_n\}$, yields the conclusion that
there exists a positive constant $M(\mathfrak{K})$ such that
\begin{equation}\label{eq3.22}
\sup_{\lambda\in\mathfrak{K}}\left\|{A}_{n\mid\lambda}\right\|
\leqslant M\left(\mathfrak{K}\right)\quad\text{for all sufficiently large $n$,}
\end{equation}
and hence there holds 
\begin{equation}\label{eq3.21}
\lim_{n\to\infty}\sup_{\lambda\in\mathfrak{K}}
\left\|\left({A}_{n\mid\lambda}
-T_{\mid\lambda}\right)^*f\right\|=0\quad\text{for each fixed $f\in L^2$.}
\end{equation}
Indeed, given any $f\in L^2$, the following relations hold:
\begin{align*} 
&\quad \lim_{n\to\infty}\sup_{\lambda\in{\mathfrak{K}} }
\left\|\left({A}_{n\mid\lambda}-T_{\mid\lambda}\right)^*f\right\|
\notag
\\
&=\lim_{n\to\infty}\sup_{\lambda\in{\mathfrak{K}} }
\left\|\left(I+ \bar\lambda\left({A}_{n\mid\lambda}\right)^*\right)
\left(T-{A}_n\right)^*R_\lambda^*(T)f\right\| && \text{by \eqref{secFResEq}}
\\
&\leqslant
\sup_{\lambda\in{\mathfrak{K}}}(1+|\lambda|M({\mathfrak{K}}))
\lim_{n\to\infty}\sup_{\lambda\in{\mathfrak{K}}}
\left\|\left(T-{A}_n\right)^*R_\lambda^*(T)f\right\| && \text{by \eqref{eqd3.16}}
\notag
\\&=0                     && \text {by Lemma~\ref{lemKato},}\notag
\end{align*}
inasmuch as $\left({A}_n\right)^*\to T^*$ strongly as $n\to\infty$
(see \eqref{eqTntoT}, \eqref{eq3.1})
and the set $\bigcup\limits_{\lambda\in{\mathfrak{K}}}R_\lambda^*(T)f$
is relatively compact in $L^2$ (being the image under the
continuous map $R_\lambda^*(T)f\colon\Pi(T)\to L^2$
(see Subsection~\ref{freres})
of the compact subset ${\mathfrak{K}}$ of $\Pi(T)$). Similarly, it can be proved that
\begin{equation}\label{eq3.19dd}
\lim_{n\to\infty}\sup_{\lambda\in\widetilde{\mathfrak{K}}}
\left\|P_n\left(A_{n\mid\lambda}P_n-T_{\mid\lambda}\right)^*f\right\|=0\quad
\text{for each fixed $f\in L^2$.}
\end{equation}
\par
With these preparations, we are ready to establish that the limit formulae
\eqref{eq3.3}-\eqref{eq3.5} all hold, each uniformly in two senses, exactly as 
stated in the enunciation of the theorem. For this purpose, use formulae 
\eqref{ke1.3.4sbkpr}, \eqref{ke1.3.4sbk}, \eqref{carf:resk}, \eqref{eq3.26}, and then the triangle
inequality to formally write
\begin{multline}\label{eq3.27}
\sup\left\|\boldsymbol{{t}}_{n\mid\lambda_n(\lambda)}^{\boldsymbol{\prime}}(t)
-\boldsymbol{t}^{\boldsymbol{\prime}}_{\mid\lambda}(t)\right\|
\\
=
\sup\left\|\left(P_n-I+\lambda\beta_nP_n+\lambda({A}_{n\mid\lambda}P_n
-T_{\mid\lambda})+\lambda^2\beta_n{A}_{n\mid\lambda}P_n\right)
\left(\boldsymbol{t}^{\boldsymbol{\prime}}(t)\right)\right\|
\\
\leqslant
\sup\left(\left\|(P_n-I)\left(\boldsymbol{t}^{\boldsymbol{\prime}}(t)\right)\right\|\right)
+
\sup\left(\left|\beta_n\right|\left|\lambda\right|\left\|P_n\left(\boldsymbol{t}^{\boldsymbol{\prime}}(t)\right)\right\|\right)
\\
+
\sup\left(\left|\lambda\right|
\left\|\left({A}_{n\mid\lambda}P_n-T_{\mid\lambda}\right)\left(\boldsymbol{t}^{\boldsymbol{\prime}}(t)\right)\right\|\right)
+
\sup\left(\left|\beta_n\right|\left|\lambda\right|^2\left\|{A}_{n\mid\lambda}P_n\right\|
\boldsymbol{\tau}^{\boldsymbol{\prime}}(t)\right),
\end{multline}
\begin{multline}\label{eqcurb}
\sup\left\|\boldsymbol{t}_{n\mid\lambda_n(\lambda)}(s)-
\boldsymbol{t}_{\mid\lambda}(s)\right\|\\
=\sup\left\|\left(\chi_n(s)\left(I+\lambda\beta_nI+\lambda{A}_{n\mid\lambda}
+\lambda^2\beta_n{A}_{n\mid\lambda}\right)
-I-\lambda T_{\mid\lambda}\right)^*\left(\boldsymbol{t}(s)\right)\right\|
\\
\leqslant
\sup\left(\left|\beta_n\right|\chi_n(s)\left|\lambda\right|\boldsymbol{\tau}(s)\right)+
\sup\left(\widehat{\chi}_n(s)\boldsymbol{\tau}(s)\right)
\\+
\sup\left(\chi_n(s)\left|\lambda\right|\left\|\left(
{A}_{n\mid\lambda}-T_{\mid\lambda}\right)^*\left(\boldsymbol{t}(s)\right)\right\|\right)
+
\sup\left(\widehat{\chi}_n(s)\left|\lambda\right|\left\|T_{\mid\lambda}\right\|\boldsymbol{\tau}(s)\right)
\\+
\sup\left(\left|\beta_n\right|\chi_n(s)\left|\lambda\right|^2
\left\|A_{n\mid\lambda}\right\|\boldsymbol{\tau}(s)\right)
\end{multline}
(``formally'' because we have not specified the domain over which the suprema 
are being taken).
Now use equations
\eqref{ke1.3.6}, \eqref{ke1.3.6sbk}, and the triangle and the Cauchy-Schwarz 
inequality to also formally write
\allowdisplaybreaks
\begin{multline}\label{eqGamb}
\sup\left|\boldsymbol{{T}}_{n\mid\lambda_n(\lambda)}(s,t)-\boldsymbol{T}_{\mid\lambda}(s,t)\right|
\\
=
\sup\left|\lambda_n(\lambda)\chi_n(s)
\left\langle \boldsymbol{{t}}^{\boldsymbol{\prime}}_{n\mid\lambda_n(\lambda)}(t),\boldsymbol{t}(s)\right\rangle
-\lambda\left\langle\boldsymbol{t}^{\boldsymbol{\prime}}_{\mid\lambda}(t),\boldsymbol{t}(s)\right\rangle
+\boldsymbol{{T}}_n(s,t)-\boldsymbol{T}(s,t)\right|
\\
\leqslant
\sup\left(\chi_n(s)\left|\lambda\right|\left|\left\langle
\boldsymbol{{t}}_{n\mid\lambda_n(\lambda)}^{\boldsymbol{\prime}}(t)
-\boldsymbol{t}^{\boldsymbol{\prime}}_{\mid\lambda}(t),
\boldsymbol{t}(s)\right\rangle\right|\right)
\\
+
\sup\left(\chi_n(s)\left|\lambda_n(\lambda)-\lambda\right|\left|
\left\langle \boldsymbol{{t}}_{n\mid\lambda_n(\lambda)}^{\boldsymbol{\prime}}(t),
\boldsymbol{t}(s)\right\rangle\right|\right)
\\+
\sup\left(\widehat{\chi}_n(s)\left|\lambda\right|
\left|\left\langle\boldsymbol{t}^{\boldsymbol{\prime}}_{\mid\lambda}(t),\boldsymbol{t}(s)\right\rangle\right|\right)
+
\sup\left|\boldsymbol{{T}}_n(s,t)-\boldsymbol{T}(s,t)\right|
\\
\leqslant
\sup\left(\chi_n(s)\left|\lambda\right|
\left\|\boldsymbol{{t}}_{n\mid\lambda_n(\lambda)}^{\boldsymbol{\prime}}(t)-
\boldsymbol{t}^{\boldsymbol{\prime}}_{\mid\lambda}(t)\right\|\boldsymbol{\tau}(s)\right)
\\+
\sup\left(\chi_n(s)
\left|\lambda\right|^2\left|\frac{\beta_n}{1-\beta_n\lambda}\right|
\left\|\boldsymbol{{t}}_{n\mid\lambda_n(\lambda)}^{\boldsymbol{\prime}}(t)\right\|\boldsymbol{\tau}(s)\right)
\\+
\sup\left(\widehat{\chi}_n(s)\left|\lambda\right|\left\|R_\lambda(T)\right\|
\boldsymbol{\tau}^{\boldsymbol{\prime}}(t)
\boldsymbol{\tau}(s)\right)
+
\sup\left|\boldsymbol{{T}}_n(s,t)-\boldsymbol{T}(s,t)\right|.
\end{multline}
\par
\textrm{(a)}
For a fixed $\lambda\in\nabla_\mathfrak{s}(\{\widetilde{A}_n\})$
take the suprema in \eqref{eq3.27} over all $t\in\mathbb{R}$.
Then each summand on the right-hand side of \eqref{eq3.27}
becomes an $n$-th term of a null sequence of $C\left(\mathbb{R},L^2\right)$-norm values,
by means of \eqref{eqPntoI}, \eqref{eq3.19}, \eqref{eq3.1}, and \eqref{Katolem}.
This proves \eqref{eq3.3} in the following uniform version:
\begin{equation}\label{eq3.3d}
\lim_{n\to\infty}\left\|\boldsymbol{{t}}_{n\mid\lambda_n(\lambda)}^{\boldsymbol{\prime}}
-\boldsymbol{t}^{\boldsymbol{\prime}}_{\mid\lambda}\right\|_{C\left(\mathbb{R},L^2\right)}=0
\quad\text{for each fixed $\lambda\in\nabla_\mathfrak{s}(\{\beta_n I+\widetilde{T}_n\})$}. 
\end{equation}
\par
Next, because of \eqref{eqPntoI}, \eqref{eq3.19}, \eqref{eqadsup}, \eqref{eq3.1}, and of the
boundedness of the set $\widetilde{\mathfrak{K}}$, the suprema at the right-hand 
side of \eqref{eq3.27}, all taken, this time, over all 
$\lambda\in\widetilde{\mathfrak{K}}$, tend as $n\to\infty$ to zero,
which proves that the limit \eqref{eq3.3} holds in the sense that
\begin{equation}\label{eq3.6d}
\lim_{n\to\infty}\sup_{\lambda\in\widetilde{\mathfrak{K}}}
\left\|\boldsymbol{{t}}_{n\mid\lambda_n(\lambda)}^{\boldsymbol{\prime}}(t)
-\boldsymbol{t}^{\boldsymbol{\prime}}_{\mid\lambda}(t)\right\|=0
  \quad\text{for each fixed
  $t\in\mathbb{R}$.}
\end{equation}
\par
\textrm{(b)}
As for the convergence in \eqref{eq3.5}, its uniformity with respect to $s$,
\begin{equation}\label{eq3.5d}
\lim_{n\to\infty}\left\|\boldsymbol{{t}}_{n\mid\lambda_n(\lambda)}-
\boldsymbol{t}_{\mid\lambda}\right\|_{C\left(\mathbb{R},L^2\right)}=0
\quad \text{for each fixed $\lambda\in\nabla_\mathfrak{s}(\{\beta_n I+T_n\})$},
\end{equation}
may be proved similarly to \eqref{eq3.3d}, first taking the suprema in 
\eqref{eqcurb} to be over $\mathbb{R}$ with respect to $s$ and then taking 
account of \eqref{eq3.21}, \eqref{eqnormkf}, \eqref{eq3.1},
and \eqref{Katolem}.
\par
To see that formula \eqref{eq3.5} also holds in its asserted form
\begin{equation}\label{eq3.8d}
\lim_{n\to\infty}\sup_{\lambda\in\mathfrak{K}}
\left\|\boldsymbol{{t}}_{n\mid\lambda_n(\lambda)}(s)
-\boldsymbol{t}_{\boldsymbol{\mid}\lambda}(s)\right\|=0\quad
\text{for each fixed $s\in\mathbb{R}$},
\end{equation}
extend the suprema in \eqref{eqcurb} over $\mathfrak{K}$ with respect to 
$\lambda$ and then apply \eqref{eq3.21}, \eqref{eq3.22}, \eqref{eqnormkf}, 
and \eqref{eq3.1} to the right-hand-side terms there.
\par
\textrm{(c)}
Two uniform versions claimed in the theorem for the limit \eqref{eq3.4} are
written as
\begin{gather}
\label{eq3.4d}
\lim_{n\to\infty}\left\|\boldsymbol{{T}}_{n\mid\lambda_n(\lambda)}-
\boldsymbol{T}_{\mid\lambda}\right\|_{C\left(\mathbb{R}^2,\mathbb{C}\right)}=0
\quad \text{for each fixed $\lambda\in\nabla_\mathfrak{s}(\{\beta_n I+\widetilde{T}_n\})$,}
\\
\label{eq3.7d}
\lim_{n\to\infty}\sup_{\lambda\in\widetilde{\mathfrak{K}}}
\left|\boldsymbol{{T}}_{n\mid\lambda_n(\lambda)}(s,t)
-\boldsymbol{T}_{\mid\lambda}(s,t)\right|=0\quad\text{for each fixed $(s,t)\in\mathbb{R}^2$}
\end{gather}
and will be proved by directly invoking \eqref{eqGamb}.
If the suprema involved therein are taken over all points $(s,t)\in\mathbb{R}^2$,
then the above-established relations \eqref{ke1.3.1}, \eqref{eq3.3d},
\eqref{eqesbet}, and \eqref{eqnormkf} together imply that all four terms on
the extreme right side of \eqref{eqGamb} converge to $0$ as $n\to\infty$,
which proves \eqref{eq3.4d}.
Similarly, the validity of \eqref{eq3.7d} can be deduced from \eqref{eq3.6d}, 
\eqref{ke1.3.1}, \eqref{eqnormkf}, and \eqref{eqesbet} upon taking the suprema 
in \eqref{eqGamb} (with $s$ and $t$ kept fixed) over all $\lambda\in\mathbb{C}$ 
belonging to the bounded set $\widetilde{\mathfrak{K}}$. The theorem is proved.
\end{proof}
\begin{remark}\label{3.2}
Because of the observation at the beginning of the above proof the
punctured disk
$D_{\|T\|}=\{\lambda\in\mathbb{C}\mid 0<|\lambda|<\tfrac1{\|T\|}\}$
has the property that
\begin{equation*}
D_{\|T\|}\subset\nabla_\mathfrak{s}(\{\beta_n I+T_n\})
\subset\nabla_\mathfrak{s}(\{\beta_n I+\widetilde{T}_n\})
\end{equation*}
for any choice of a complex null sequence $\{\beta_n\}$.
It therefore follows that if $\lambda\in D_{\|T\|}$, the sequence
$\{\lambda_n(\lambda)\}$  figuring in formulae \eqref{eq3.3d}, \eqref{eq3.5d},
and \eqref{eq3.4d} can be replaced by any sequence approaching $\lambda$,
while retaining the uniform convergences. In particular, one can simply take
each $\lambda_n(\lambda)$ equal to $\lambda$.
Meanwhile there is another, more practical, expression 
for $\boldsymbol{T}_{\mid\lambda}$ at $\lambda\in D_{\|T\|}$, which
may be obtained as follows:
\begin{align*}\label{mi5}
\boldsymbol{T}_{\mid\lambda}(s,t)
&=\boldsymbol{T}(s,t)+\lambda\langle R_\lambda(T)
(\boldsymbol{t}^{\boldsymbol{\prime}}(t)),\boldsymbol{t}(s)\rangle
\notag
&& \text{by \eqref{ke1.3.6} and \eqref{carf:resk}}
\\&=
\boldsymbol{T}(s,t)+\lambda\langle \left(\sum_{n=0}^\infty \lambda^nT^n\right)
(\boldsymbol{t}^{\boldsymbol{\prime}}(t)),\boldsymbol{t}(s)\rangle
&& \text{by \eqref{resseries}}
\\&=
\boldsymbol{T}(s,t)+\sum_{n=0}^\infty\left\langle\lambda^{n+1}T^n
(\boldsymbol{t}^{\boldsymbol{\prime}}(t)),\boldsymbol{t}(s)\right\rangle
&& \text{by \eqref{resseries}}
\\&=
\sum_{n=1}^\infty\lambda^{n-1}\boldsymbol{T}^{[n]}(s,t).
&& \text{by \eqref{iterant}}
\end{align*}
The series in the last line is the Neumann series for $\boldsymbol{T}_{\mid\lambda}$;
it is convergent to $\boldsymbol{T}_{\mid\lambda}$ in
$C\left(\mathbb{R}^2,\mathbb{C}\right)$ for $\lambda$ satisfying
\eqref{resseries}, as
\allowdisplaybreaks
\begin{equation*}
\left\|\boldsymbol{T}^{[n]}\right\|_{C\left(\mathbb{R}^2,\mathbb{C}\right)}=
\sup_{(s,t)\in\mathbb{R}^2}\left|\langle T^{n-2}
\boldsymbol{t}^{\boldsymbol{\prime}}(t),\boldsymbol{t}(s)\rangle\right|
\leqslant
\|\boldsymbol{\tau}^{\boldsymbol{\prime}}\|_{C(\mathbb{R},\mathbb{R})}
\|\boldsymbol{\tau}\|_{C(\mathbb{R},\mathbb{R})}\left\|T^{n-2}\right\|.
\end{equation*}
\end{remark}
\begin{remark}\label{remeq3.3}
Applying the respective results of Theorem~\ref{thmeq3.1} in conjunction with
the inequalities
\begin{gather*}
\left\|\boldsymbol{\widetilde{t}}_{n\mid\lambda_n(\lambda)}^{\boldsymbol{\prime}}(t)
-\boldsymbol{t}^{\boldsymbol{\prime}}_{\mid\lambda}(t)\right\|\leqslant\chi_n(t)
\left\|\boldsymbol{{t}}_{n\mid\lambda_n(\lambda)}^{\boldsymbol{\prime}}(t)
-\boldsymbol{t}^{\boldsymbol{\prime}}_{\mid\lambda}(t)\right\|+
\widehat{\chi}_n(t)\left\|\boldsymbol{t}^{\boldsymbol{\prime}}_{\mid\lambda}(t)\right\|,
\\
\left|\boldsymbol{\widetilde{T}}_{n\mid\lambda_n(\lambda)}(s,t)-\boldsymbol{T}_{\mid\lambda}(s,t)\right|\leqslant
\chi_n(t)\left|\boldsymbol{{T}}_{n\mid\lambda_n(\lambda)}(s,t)-\boldsymbol{T}_{\mid\lambda}(s,t)\right|
+\widehat{\chi}_n(t)\left|\boldsymbol{T}_{\mid\lambda}(s,t)\right|
\end{gather*}
(see \eqref{ke1.3.9}, \eqref{ke1.3.10}) yields that
the limits \eqref{eq3.3d}, \eqref{eq3.6d}, \eqref{eq3.4d}, and \eqref{eq3.7d}
all remain valid upon replacing
$\boldsymbol{{t}}_{n\mid\lambda_n(\lambda)}^{\boldsymbol{\prime}}$
and $\boldsymbol{{T}}_{n\mid\lambda_n(\lambda)}$
by
$\boldsymbol{\widetilde{t}}_{n\mid\lambda_n(\lambda)}^{\boldsymbol{\prime}}$ and
$\boldsymbol{\widetilde{T}}_{n\mid\lambda_n(\lambda)}$, respectively.
In turn, the limits \eqref{eq3.5d} and \eqref{eq3.8d} continue to hold with
$\boldsymbol{{t}}_{n\mid\lambda_n(\lambda)}$ and $\nabla_\mathfrak{s}(\{\beta_n I+T_n\})$
replaced respectively by $\boldsymbol{\widetilde{t}}_{n\mid\lambda_n(\lambda)}$
and $\nabla_\mathfrak{s}(\{\beta_n I+\widetilde{T}_n\})$, and to prove this
use can be made of the inequalities
\begin{multline*}
\left\|\boldsymbol{\widetilde{t}}_{n\mid\lambda_n(\lambda)}(s)
-\boldsymbol{t}_{\mid\lambda}(s)\right\|
\leqslant\left\|P_n(\boldsymbol{t}_{n\mid\lambda_n(\lambda)}(s)
-\boldsymbol{t}_{\mid\lambda}(s))\right\|+
\left\|(I-P_n)\left(\boldsymbol{t}_{\mid\lambda}(s)\right)\right\|,
\\
\left\|P_n(\boldsymbol{t}_{n\mid\lambda_n(\lambda)}(s)-
\boldsymbol{t}_{\mid\lambda}(s))\right\|
\\
=\left\|P_n\left(\chi_n(s)\left(I+\lambda\beta_nI+\lambda{A}_{n\mid\lambda}
+\lambda^2\beta_n{A}_{n\mid\lambda}\right)
-I-\lambda T_{\mid\lambda}\right)^*\left(\boldsymbol{t}(s)\right)\right\|
\\
\leqslant
\left|\beta_n\right|\chi_n(s)\left|\lambda\right|\boldsymbol{\tau}(s)+
\widehat{\chi}_n(s)\boldsymbol{\tau}(s)
\\
+
\chi_n(s)\left|\lambda\right|\left\|P_n\left(
{A}_{n\mid\lambda}-T_{\mid\lambda}\right)^*\left(\boldsymbol{t}(s)\right)\right\|
+
\widehat{\chi}_n(s)\left|\lambda\right|\left\|T_{\mid\lambda}\right\|\boldsymbol{\tau}(s)
\\+
\left|\beta_n\right|\chi_n(s)\left|\lambda\right|^2
\left\|A_{n\mid\lambda}P_n\right\|\boldsymbol{\tau}(s)
\end{multline*}
(cf.~\eqref{eqcurb}) and of the properties \eqref{eq3.19dd} and \eqref{eqadsup}.
\end{remark}
\par
In connection with Theorem~\ref{thmeq3.1} the following natural question can be 
asked: in what cases are the sets $\overset{\circ}\Pi(T):=\Pi(T)\setminus\{0\}$ and
$\nabla_\mathfrak{s}(\{\frac{\lambda_n-\lambda}{\lambda\lambda_n} I+T_n\})$ coincident? 
One answer to this question is given in the following theorem.
\begin{theorem}\label{thmeq3.4}
Suppose that
\begin{equation}\label{eq3.9}
\left\|(T-T_n)T_n^m\right\|\to 0\quad\text{as $n\to\infty$,}
\end{equation}
for some $m$ in $\mathbb{N}$. Then
\begin{equation}\label{eq3.11}
\nabla_{\mathfrak{s}}(\{\beta_nI+T_n\})=\overset{\circ}\Pi(T)
\subset\nabla_{\mathfrak{b}}(\{\beta_nI+T_n\})
\end{equation}
for any choice of a sequence
$\left\{\beta_n\right\}$ converging to $0$.
\end{theorem}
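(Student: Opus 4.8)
The plan is to reduce the assertion, via the machinery already assembled for Theorem~\ref{thmeq3.1}, to a single uniform resolvent bound, and then to establish that bound by exhibiting an explicit approximate left inverse whose defect is governed by hypothesis \eqref{eq3.9}. Write $A_n:=\beta_nI+T_n$ as before. From \eqref{eq3.4new} one already has $\nabla_\mathfrak{s}(\{A_n\})=\nabla_\mathfrak{b}(\{A_n\})\cap\Pi(T)$, while the definition of $\nabla_\mathfrak{s}$ together with Theorem~\ref{thmeq3.1} gives $\nabla_\mathfrak{s}(\{A_n\})\subseteq\overset{\circ}\Pi(T)$ and the definitional inclusion $\nabla_\mathfrak{s}(\{A_n\})\subseteq\nabla_\mathfrak{b}(\{A_n\})$. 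Hence the whole theorem will follow once I prove the reverse inclusion $\overset{\circ}\Pi(T)\subseteq\nabla_\mathfrak{b}(\{A_n\})$: combined with the above it forces $\overset{\circ}\Pi(T)\subseteq\nabla_\mathfrak{b}(\{A_n\})\cap\Pi(T)=\nabla_\mathfrak{s}(\{A_n\})\subseteq\overset{\circ}\Pi(T)$, which is the equality in \eqref{eq3.11}, and the remaining inclusion into $\nabla_\mathfrak{b}(\{A_n\})$ is then automatic. Using the first identity in \eqref{eq3.26} in the form $R_\lambda(A_n)=(1-\beta_n\lambda)^{-1}R_{\lambda_n(\lambda)}(T_n)$ together with $\beta_n\to0$, I would reduce the matter further to the claim: for each fixed $\lambda\in\overset{\circ}\Pi(T)$, setting $\mu_n:=\lambda_n(\lambda)\to\lambda$, the resolvents $R_{\mu_n}(T_n)$ exist for all large $n$ and satisfy $\sup_n\|R_{\mu_n}(T_n)\|<\infty$; then \eqref{eq3.26} transfers this bound to $A_n$, so that $\lambda\in\nabla_\mathfrak{b}(\{A_n\})$.

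To prove this claim I would work for $n$ so large that $\mu_n\in\Pi(T)$ (possible since $\Pi(T)$ is open and $R_{\mu_n}(T)\to R_\lambda(T)$ in norm by resolvent continuity) and introduce, with $m$ the exponent furnished by \eqref{eq3.9},
\[
J_n:=\sum_{k=0}^{m-1}\mu_n^kT_n^k+\mu_n^m R_{\mu_n}(T)\,T_n^m .
\]
Writing $B_n:=I-\mu_nT_n$, a telescoping computation combined with the elementary identity $R_{\mu_n}(T)(I-\mu_nT_n)=I+\mu_nR_{\mu_n}(T)(T-T_n)$ yields
\[
J_nB_n=I+\mu_n^{m+1}R_{\mu_n}(T)\,(T-T_n)T_n^m .
\]
The whole point of this particular $J_n$ is that its defect carries the factor $(T-T_n)T_n^m$ in exactly the left-to-right order controlled by the hypothesis. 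Since $\mu_n$ and $\|R_{\mu_n}(T)\|$ remain bounded while $\|(T-T_n)T_n^m\|\to0$ by \eqref{eq3.9}, I obtain $\|J_nB_n-I\|\to0$, whence $J_nB_n$ is invertible with $\|(J_nB_n)^{-1}\|\le2$ for all large $n$.

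It then remains to upgrade this to two-sided invertibility of $B_n$ with a uniform bound. The invertibility of $J_nB_n$ makes $B_n=I-\mu_nT_n$ injective; and because each $T_n=P_nT$ is nuclear by condition~(iv), hence compact, the classical Fredholm alternative promotes injectivity of $I-\mu_nT_n$ to invertibility, so $\mu_n\in\Pi(T_n)$ and $R_{\mu_n}(T_n)=(J_nB_n)^{-1}J_n$. Estimating $\|R_{\mu_n}(T_n)\|\le\|(J_nB_n)^{-1}\|\,\|J_n\|\le2\sup_n\|J_n\|$ and noting that $\|J_n\|\le\sum_{k=0}^{m-1}|\mu_n|^k\|T\|^k+|\mu_n|^m\|R_{\mu_n}(T)\|\,\|T\|^m$ stays bounded in $n$ (as $\|T_n\|\le\|T\|$, $\mu_n\to\lambda$, and $\|R_{\mu_n}(T)\|$ is bounded) gives the sought uniform bound, finishing the reduction.

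The main obstacle to anticipate is precisely that $T_n\to T$ only strongly, so that neither a Neumann series nor the second resolvent equation \eqref{secResEq} can be applied to $I-\mu_nT_n$ directly, and the natural error terms $(T-T_n)R_\lambda(T)$ are not norm-small. The role of \eqref{eq3.9} is to supply the one genuinely norm-small quantity $(T-T_n)T_n^m$, and the delicate part is to arrange the approximate inverse $J_n$ so that its defect is a \emph{left} multiple of exactly this quantity (a right approximate inverse would require $(T-T_n)$ on the wrong side, which is not controlled). Finally, compactness is available for $T_n$ but not for $A_n=\beta_nI+T_n$; this is the reason the argument is routed through the scaling \eqref{eq3.26} rather than carried out for $A_n$ directly.
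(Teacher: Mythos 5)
Your proof is correct and is essentially the paper's own argument: the paper establishes the same key identity (its equation \eqref{eq3.12}, which is your $J_nB_n$ identity left-multiplied by $I-\lambda T$ and written for $A_n$ at the fixed point $\lambda$ rather than for $T_n$ at $\mu_n=\lambda_n(\lambda)$), deduces injectivity of $I-\lambda A_n$ from the norm-smallness of the defect, upgrades this to invertibility via the compactness of $T_n$ and the Fredholm alternative, reads off the uniform resolvent bound, and concludes through \eqref{eq3.4new}. The only bookkeeping difference is that the paper verifies $\left\|(T-A_n)A_n^m\right\|\to0$ by a binomial expansion from \eqref{eq3.9} and $\beta_n\to0$, whereas you invoke \eqref{eq3.9} directly for $T_n$ and absorb the $\beta_n$'s through the scaling \eqref{eq3.26} and the continuity of $\mu\mapsto R_\mu(T)$.
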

\begin{proof}[\indent Proof]
Continue to denote $A_n:=\beta_n I+T_n$ as in the previous proof.
Let $\lambda$ be a fixed non-zero regular value for $T$.
A straightforward calculation yields the equation
\begin{equation}\label{eq3.12}
\begin{gathered}
\left((I-\lambda T)
\sum\limits_{k=0}^{m-1}\lambda^k{A}_n^k+\lambda^m{A}_n^m\right)
\left(I-\lambda{A}_n\right)
\\\quad
=(I-\lambda T)\left(I+\lambda^{m+1}R_\lambda(T)(T-{A}_n){A}_n^m\right).
\end{gathered}
\end{equation}
Expanding binomially $\left(\beta_n I+T_n\right)^m$ and
utilizing conditions \eqref{eq3.1} and \eqref{eq3.9}
gives
\begin{multline*}
\left\|(T-{A}_n){A}_n^m\right\|=
\left\|(T-\beta_nI-T_n)
\left(\beta_nI+T_n\right)^m\right\|
\\
\leqslant\left\|(T-T_n)T_n^m\right\|+
\left\|\beta_nT_n^m\right\|
\\ +\left\|(T-\beta_n I-T_n)\right\|
\sum\limits_{k=1}^m\begin{pmatrix}m\\k\end{pmatrix}
\left|\beta_n^k\right|\left\|T_n^{m-k}\right\|\to0\quad\text{as $n\to\infty$,}
\end{multline*}
so
$
\left|\lambda\right|^{m+1}\left\|R_\lambda(T)(T-{A}_n){A}_n^m\right\|<\frac12
$
for all $n$ sufficiently large. Note that, for such $n$,
the right-hand side of equation \eqref{eq3.12} does represent an invertible 
operator on $L^2$. This makes the last factor
\begin{equation}\label{eq3.13}
I-\lambda{A}_n=(1-\beta_n\lambda)
\left(I-\tfrac{\lambda}{1-\beta_n\lambda}T_n\right)
\end{equation}
on the left-hand side one-to-one and so invertible, as $T_n$ is compact.
Hence, for such $n$, $\frac{\lambda}{1-\beta_n\lambda}\in\Pi(T_n)$,
$\lambda\in\Pi({A}_n)$, and 
\begin{gather*}
\left\|A_{n\mid\lambda}\right\|=
\frac1{|\lambda|}\left\|R_\lambda({A}_n)-I\right\|
\\=
\frac1{|\lambda|}\left\|
\left[I+\lambda^{m+1}R_\lambda(T)(T-{A}_n){A}_n^m\right]^{-1}
R_\lambda(T)\left((1-\lambda T)\sum\limits_{k=0}^{m-1}(\lambda{A}_n)^k+
(\lambda{A}_n)^m\right)-I
\right\|
\\\leqslant\frac1{|\lambda|}\frac{ \left\|R_\lambda(T)\right\|(1+\left|\lambda\right|\left\|T\right\|)
\sum\limits_{k=0}^{m}\left|\lambda\right|^{k} \left\|{A}_n\right\|^k }
{1-\left|\lambda\right|^{m+1}\left\|R_\lambda(T)(T-{A}_n){A}_n^m\right\|}
+\frac1{|\lambda|}
\\\leqslant M(\lambda):=
\frac2{|\lambda|}\left\|R_\lambda(T)\right\|(1+\left|\lambda\right|\left\|T\right\|)
\sum\limits_{k=0}^{m}\left|\lambda\right|^{k} (\max_{n\in\mathbb{N}}|\beta_n|+\|T\|)^k
+\frac1{|\lambda|},
\end{gather*}
where in the second equality use has been made of equation \eqref{eq3.12}. 
Thus (see \eqref{eq3.2}), $\lambda\in\nabla_{\mathfrak{b}}(\{A_n\})$, and
\eqref{eq3.11} now follows by \eqref{eq3.4new}. The theorem is proved.
\end{proof}
\begin{remark}
Observe by \eqref{TnTn} that \eqref{eq3.9} implies
\begin{equation}\label{eq3.10}
\left\|(T-\widetilde{T}_n)\widetilde{T}_n^m\right\|
=\left\|(T-T_n)T_n^mP_n\right\|\to 0
\quad\text{as $n\to\infty$}.
\end{equation}
The same result as in the above theorem is obtained, similarly,
with the sequence $\{\widetilde{T}_n\}$ satisfying \eqref{eq3.10}
and it reads as follows:
\begin{equation*}
\nabla_{\mathfrak{s}}(\{\beta_nI+\widetilde{T}_n\})=\overset{\circ}\Pi(T)\subset
\nabla_{\mathfrak{b}}(\{\beta_nI+\widetilde{T}_n\})
\end{equation*}
for any complex null sequence $\{\beta_n\}$.
Consequently, under condition \eqref{eq3.9},
\begin{equation}\label{eq3.14}
\nabla_{\mathfrak{s}}(\{\beta_nI+T_n\})=
\nabla_{\mathfrak{s}}(\{\beta_nI+\widetilde{T}_n\})=\overset{\circ}\Pi(T)
\end{equation}
for any complex null sequence $\{\beta_n\}$.
\end{remark}
The following two corollaries may be of interest for further applications.
\begin{corollary}
If $\lambda\in\overset{\circ}\Pi(T)$ and condition \eqref{eq3.9} holds, then  $\lambda$ is
not the limit of any sequence $\left\{\xi_n\right\}$ satisfying
$\xi_n\in\Lambda(T_n)$ at each $n$.
\end{corollary}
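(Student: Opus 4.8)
The plan is to argue by contradiction, extracting from condition \eqref{eq3.9} a uniform lower bound on the distance from $\lambda$ to the characteristic sets $\Lambda(T_n)$. First I would specialize Theorem~\ref{thmeq3.4} to the constant null sequence $\beta_n\equiv0$, which is legitimate since $\{0\}$ converges to $0$; then $\beta_nI+T_n=T_n$, and the inclusion in \eqref{eq3.11} reads $\overset{\circ}\Pi(T)\subset\nabla_{\mathfrak{b}}(\{T_n\})$. As $\lambda\in\overset{\circ}\Pi(T)$ by hypothesis, this places $\lambda$ in $\nabla_{\mathfrak{b}}(\{T_n\})$, so by the defining property \eqref{eq3.2} there are positive constants $M(\lambda)$ and $N(\lambda)$ with $\lambda\in\Pi(T_n)$ and $\|T_{n\mid\lambda}\|\leqslant M(\lambda)$ for every $n>N(\lambda)$.

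The heart of the argument is then to convert this uniform resolvent bound into a fixed disk about $\lambda$ that meets no $\Lambda(T_n)$. For $n>N(\lambda)$ and any $\zeta$ I would use the factorization $I-\zeta T_n=(I-\lambda T_n)\bigl(I-(\zeta-\lambda)T_{n\mid\lambda}\bigr)$, which follows directly from $(I-\lambda T_n)R_\lambda(T_n)=I$ and $R_\lambda(T_n)T_n=T_{n\mid\lambda}$ (see \eqref{eqress} and \eqref{eqresf}). The first factor is invertible because $\lambda\in\Pi(T_n)$; the second is invertible by a Neumann-series argument as soon as $|\zeta-\lambda|\,\|T_{n\mid\lambda}\|<1$, hence certainly when $|\zeta-\lambda|<1/M(\lambda)$. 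Therefore $I-\zeta T_n$ is invertible for all such $\zeta$, i.e.\ the open disk $\{\zeta:|\zeta-\lambda|<1/M(\lambda)\}$ lies in $\Pi(T_n)$ and is thus disjoint from $\Lambda(T_n)$. Equivalently, $\mathrm{dist}(\lambda,\Lambda(T_n))\geqslant 1/M(\lambda)$ for every $n>N(\lambda)$.

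The conclusion is then immediate: were there a sequence $\{\xi_n\}$ with $\xi_n\in\Lambda(T_n)$ for each $n$ and $\xi_n\to\lambda$, one would have $|\xi_n-\lambda|<1/M(\lambda)$ for all large $n$, contradicting the distance bound just established, so no such sequence exists. I expect the only substantive step to be the second one---the passage from a norm bound on the Fredholm resolvent $T_{n\mid\lambda}$ to a spectral-gap estimate uniform in $n$; everything else is bookkeeping. This is the classical ``a resolvent bound forces a whole neighbourhood into the resolvent set'' device, here written in the characteristic-value normalization $I-\zeta T_n$ rather than the usual $\zeta I-T_n$ form, with the single nontrivial input being that $\lambda\in\nabla_{\mathfrak{b}}(\{T_n\})$ supplies a bound $M(\lambda)$ independent of $n$.
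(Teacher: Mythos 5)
Your proof is correct, but it reaches the contradiction by a genuinely different mechanism than the paper. The paper tailors the null sequence to the hypothetical sequence $\{\xi_n\}$: it sets $\beta_n=\frac{\xi_n-\lambda}{\lambda\xi_n}$ (a null sequence because $\xi_n\to\lambda\neq0$), invokes Theorem~\ref{thmeq3.4} to get $\lambda\in\Pi(\beta_nI+T_n)$ for large $n$, and then reads off from the factorization \eqref{eq3.13} that $\frac{\lambda}{1-\beta_n\lambda}=\xi_n\in\Pi(T_n)$, directly contradicting $\xi_n\in\Lambda(T_n)$. You instead invoke Theorem~\ref{thmeq3.4} only for $\beta_n\equiv0$, extract from $\lambda\in\nabla_{\mathfrak{b}}(\{T_n\})$ the uniform bound $\|T_{n\mid\lambda}\|\leqslant M(\lambda)$ of \eqref{eq3.2}, and convert it via the identity $I-\zeta T_n=(I-\lambda T_n)\bigl(I-(\zeta-\lambda)T_{n\mid\lambda}\bigr)$ and a Neumann series into the $n$-uniform spectral gap $\mathrm{dist}(\lambda,\Lambda(T_n))\geqslant 1/M(\lambda)$ for $n>N(\lambda)$; the factorization is easily checked since $(I-\lambda T_n)T_{n\mid\lambda}=T_n$. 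Both arguments lean on Theorem~\ref{thmeq3.4} (hence on condition \eqref{eq3.9}), but yours buys a quantitative, uniform-in-$n$ lower bound on the distance from $\lambda$ to the characteristic sets, which is slightly more information than the purely qualitative non-convergence statement; the paper's version is shorter because the identity \eqref{eq3.13} is already on record and does all the work once $\beta_n$ is chosen cleverly.
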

\begin{proof}[\indent Proof]
Assume, on the contrary, that there exists a sequence $\left\{\xi_n\right\}$ with
$\xi_n\in\Lambda(T_n)$ such that  $\xi_n\to\lambda\in\overset{\circ}\Pi(T)$
as $n\to\infty$.
Theorem~\ref{thmeq3.4} says that $\lambda\in\Pi(\beta_nI+T_n)$
for all $n$ sufficiently large, where
$\beta_n=\frac{\xi_n-\lambda}{\lambda\xi_n}$.
This implies via \eqref{eq3.13} that
$\frac{\lambda}{1-\beta_n\lambda}=\xi_n\in\Pi(T_n)$
for all sufficiently large $n$, which is a contradiction. The corollary is proved.
\end{proof}
\begin{corollary}
If an operator $T$ with condition \eqref{eq3.9} is self-adjoint (that is, such that $T^*=T$) then
\begin{equation}\label{eq3.15}
\nabla_{\mathfrak{b}}(\{T_n\})=\nabla_{\mathfrak{b}}(\{\widetilde{T}_n\})
=\nabla_{\mathfrak{s}}(\{\widetilde{T}_n\})=
\nabla_{\mathfrak{s}}(\{T_n\})=\overset{\circ}\Pi(T).
\end{equation}
\end{corollary}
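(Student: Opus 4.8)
The plan is to deduce \eqref{eq3.15} from the a priori inclusions already in place, supplemented by one genuinely new ingredient coming from self-adjointness. First I would record what is available for free. Since $T$ satisfies \eqref{eq3.9}, formula \eqref{eq3.14} applied with the null sequence $\beta_n\equiv0$ gives $\nabla_\mathfrak{s}(\{T_n\})=\nabla_\mathfrak{s}(\{\widetilde{T}_n\})=\overset{\circ}\Pi(T)$, so three of the five sets in \eqref{eq3.15} already coincide with $\overset{\circ}\Pi(T)$. Because $\nabla_\mathfrak{s}(\{S_n\})\subseteq\nabla_\mathfrak{b}(\{S_n\})$ holds by the very definition of these sets, this yields $\overset{\circ}\Pi(T)\subseteq\nabla_\mathfrak{b}(\{T_n\})$ and $\overset{\circ}\Pi(T)\subseteq\nabla_\mathfrak{b}(\{\widetilde{T}_n\})$. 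Hence it remains only to prove the two reverse inclusions; and since each region of boundedness consists of non-zero numbers, both reduce to showing that no point of the characteristic set $\Lambda(T)$ lies in the region of boundedness.

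The heart of the matter, and the step I expect to be the main obstacle, is to show that $\nabla_\mathfrak{b}(\{\widetilde{T}_n\})\cap\Lambda(T)=\varnothing$; this is exactly where self-adjointness is indispensable. Fix $\zeta\in\Lambda(T)$ with $\zeta\neq0$, so that $\mu:=1/\zeta$ belongs to $\sigma(T)$. Since $T=T^*$, its spectrum is real and coincides with the approximate point spectrum, so for every $\varepsilon>0$ there is a unit vector $x=x_\varepsilon$ with $\|(I-\zeta T)x\|=|\zeta|\,\|(T-\mu)x\|<\varepsilon$. I would then test $I-\zeta\widetilde{T}_n$ on $y_n:=P_nx$: writing $\widetilde{T}_n=P_nTP_n$ and using the decomposition $x-\zeta TP_nx=(I-\zeta T)x+\zeta T(I-P_n)x$ together with $\|(I-P_n)x\|\to0$ from \eqref{eqPntoI} and the fact that $P_n$ is a contraction, one gets $\|(I-\zeta\widetilde{T}_n)y_n\|\leqslant\varepsilon+|\zeta|\,\|T\|\,\|(I-P_n)x\|$ while $\|y_n\|\to1$. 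Assuming toward a contradiction that $\zeta\in\nabla_\mathfrak{b}(\{\widetilde{T}_n\})$, so that $\zeta\in\Pi(\widetilde{T}_n)$ and $\|R_\zeta(\widetilde{T}_n)\|=\|I+\zeta\widetilde{T}_{n\mid\zeta}\|\leqslant 1+|\zeta|M(\zeta)$ for all large $n$ (cf. \eqref{eq3.2} and \eqref{resFres}), the identity $y_n=R_\zeta(\widetilde{T}_n)(I-\zeta\widetilde{T}_n)y_n$ forces $\|y_n\|\leqslant(1+|\zeta|M(\zeta))\bigl(\varepsilon+|\zeta|\,\|T\|\,\|(I-P_n)x\|\bigr)$; letting $n\to\infty$ and then taking $\varepsilon$ small enough contradicts $\|y_n\|\to1$. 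Thus $\zeta\notin\nabla_\mathfrak{b}(\{\widetilde{T}_n\})$, whence $\nabla_\mathfrak{b}(\{\widetilde{T}_n\})\subseteq\Pi(T)$ and therefore $\nabla_\mathfrak{b}(\{\widetilde{T}_n\})=\overset{\circ}\Pi(T)$.

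For the remaining set $\nabla_\mathfrak{b}(\{T_n\})$ I would proceed in either of two ways. The quickest is to note that with $\beta_n\equiv0$ relation \eqref{ke1.3.9} (equivalently \eqref{eqd3.10}) reads $\widetilde{T}_{n\mid\zeta}=T_{n\mid\zeta}P_n$, so $\|\widetilde{T}_{n\mid\zeta}\|\leqslant\|T_{n\mid\zeta}\|$ and, since $\Pi(T_n)=\Pi(\widetilde{T}_n)$ by \eqref{ke1.3.8}, one gets $\nabla_\mathfrak{b}(\{T_n\})\subseteq\nabla_\mathfrak{b}(\{\widetilde{T}_n\})=\overset{\circ}\Pi(T)$; combined with the inclusion of the first paragraph this gives $\nabla_\mathfrak{b}(\{T_n\})=\overset{\circ}\Pi(T)$. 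Alternatively, the same approximate-eigenvector computation applies directly to $T_n=P_nT$ by testing $I-\zeta T_n$ on $x$ itself, using $x-\zeta T_nx=(I-\zeta T)x+\zeta(I-P_n)Tx$ and $\|(I-P_n)Tx\|\to0$. Either route closes the circle $\overset{\circ}\Pi(T)\subseteq\nabla_\mathfrak{b}(\{T_n\})\subseteq\nabla_\mathfrak{b}(\{\widetilde{T}_n\})=\overset{\circ}\Pi(T)$ and, with the first paragraph, establishes all four equalities in \eqref{eq3.15}. The only delicate point throughout is the passage to the approximate point spectrum, which fails for non-self-adjoint $T$ and is precisely what confines the corollary to the self-adjoint case.
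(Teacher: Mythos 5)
Your proof is correct, and its skeleton matches the paper's: both reduce \eqref{eq3.15} via \eqref{eq3.14} (with $\beta_n\equiv0$) and the inclusions $\overset{\circ}\Pi(T)\subset\nabla_{\mathfrak{b}}(\{T_n\})\subset\nabla_{\mathfrak{b}}(\{\widetilde{T}_n\})$ to the single claim $\nabla_{\mathfrak{b}}(\{\widetilde{T}_n\})\cap\Lambda(T)=\varnothing$, and both argue that claim by contradiction. But the key step is genuinely different. The paper invokes Theorem~VIII.24 of \cite{Reed:Sim1}: since the $\widetilde{T}_n=P_nTP_n$ are self-adjoint and converge to $T$ in the strong resolvent sense, any $\lambda\in\Lambda(T)$ is the limit of points $\lambda_n\in\Lambda(\widetilde{T}_n)$, whence $\left\|R_\lambda(\widetilde{T}_n)\right\|\geqslant\left|\lambda_n\right|/\left|\lambda_n-\lambda\right|\to+\infty$, contradicting the boundedness in \eqref{eq3.2}. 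You instead use only the elementary fact that the spectrum of the self-adjoint $T$ equals its approximate point spectrum, compress an approximate eigenvector $x$ by $P_n$, and read off a contradiction from $y_n=R_\zeta(\widetilde{T}_n)(I-\zeta\widetilde{T}_n)y_n$ with $\|y_n\|\to1$ and $\|(I-\zeta\widetilde{T}_n)y_n\|$ eventually below $\varepsilon$. Your route is more self-contained (no external spectral-approximation theorem) and, as your alternative computation with $x-\zeta T_nx=(I-\zeta T)x+\zeta(I-P_n)Tx$ shows, it applies directly to the non-self-adjoint operators $T_n=P_nT$ as well, so the inclusion $\nabla_{\mathfrak{b}}(\{T_n\})\subseteq\nabla_{\mathfrak{b}}(\{\widetilde{T}_n\})$ coming from \eqref{eqd3.10} is not even needed; what the paper's argument buys in exchange is brevity, since the quantitative blow-up of $\left\|R_\lambda(\widetilde{T}_n)\right\|$ is delivered in one line once the cited theorem is granted. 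The one point to keep explicit in your write-up is the order of limits at the end: for each fixed $\varepsilon$ you first let $n\to\infty$ (so that $\|(I-P_n)x_\varepsilon\|\to0$ and $\|y_n\|\to1$), obtaining $1\leqslant(1+|\zeta|M(\zeta))\varepsilon$, and only then let $\varepsilon\to0$; as you have arranged it, this is sound.
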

\begin{proof}[\indent Proof]
From the facts proved above it follows that
$
\overset{\circ}\Pi(T)\subset \nabla_{\mathfrak{b}}(\{T_n\})
\subset\nabla_{\mathfrak{b}}(\{\widetilde{T}_n\}).
$
This and \eqref{eq3.14} together show that to prove \eqref{eq3.15} it is
enough to prove that
$\nabla_{\mathfrak{b}}(\{\widetilde{T}_n\})\subset\overset{\circ}\Pi(T)$.
Suppose $\lambda\in\nabla_{\mathfrak{b}}(\{\widetilde{T}_n\})$, so
there is a positive constant $M$ such that
\begin{equation}\label{eq3.16}
\left\|\widetilde{T}_{n\mid\lambda}\right\|\leqslant M
\end{equation}
for all sufficiently large $n$,
but suppose, contrary to $\lambda\in\overset{\circ}\Pi(T)$, that $\lambda\in\Lambda(T)$.
Then, by  Theorem~VIII.24 of \cite[p.~290]{Reed:Sim1},
there exists a sequence $\lambda_n\in\Lambda(\widetilde{T}_n)$ $(n\in\mathbb{N})$ such that
$\lambda_n\to\lambda$ as $n\to\infty$. Consequently,
$$
\left|\lambda\right|\left\|\widetilde{T}_{n\mid\lambda}\right\|+1\geqslant
\left\|R_\lambda(\widetilde{T}_n)\right\|\geqslant\dfrac{\left|\lambda_n\right|}
{\left|\lambda_n-\lambda\right|}\to+\infty,
$$
which, however, is incompatible with \eqref{eq3.16}.
The corollary is proved.
\end{proof}
\begin{remark}
In terms of kernels, condition \eqref{eq3.9} (resp., \eqref{eq3.10}) means
that nuclear operators, induced on $L^2$ by the (explicit) kernels
\begin{gather*}
\boldsymbol{J}_n(s,t)=
\widehat{\chi}_n(s)\int\limits_{\mathbb{I}_n}\boldsymbol{T}(s,x)\boldsymbol{T}^{[m]}_n(x,t)\,dx\\
(\text{resp.\ }
\boldsymbol{\widetilde{J}}_n(s,t)=
\widehat{\chi}_n(s)
\int\limits_{\mathbb{I}_n}\boldsymbol{T}(s,x)\boldsymbol{\widetilde{T}}_n^{[m]}(x,t)\,dx),
\end{gather*}
have their operator norm going to $0$ as $n$ goes to infinity. In particular, if
the nuclear operators $(I-P_n)TP_n$, with kernels
$\widehat{\chi}_n(s)\boldsymbol{T}(s,t)\chi_n(t)$,
converge to zero operator in the operator norm as $n\to\infty$, then both
conditions \eqref{eq3.9} and \eqref{eq3.10} automatically hold with any fixed
$m$ in $\mathbb{N}$, and
this may happen even if $T$ is not a compact operator. Note, incidentally, that 
for the latter there is a stronger conclusion about the uniform-on-compacta
convergence on all of $\overset{\circ}\Pi(T)$:
\end{remark}
\begin{theorem}
If $T$ is a compact operator, then the following limits hold:
\begin{equation}\label{eq5.1} 
\begin{gathered}
\lim_{n\to\infty}
\sup_{\lambda\in{\mathfrak{K}}}\left\|\boldsymbol{{t}}_{n\mid\lambda}^{\boldsymbol{\prime}}-\boldsymbol{t}^{\boldsymbol{\prime}}_{\mid\lambda}\right\|_{C\left(\mathbb{R},L^2\right)}=0,
\quad
\lim_{n\to\infty}\sup_{\lambda\in{\mathfrak{K}}}\left\|\boldsymbol{{t}}_{n\mid\lambda}-
\boldsymbol{t}_{\mid\lambda}\right\|_{C\left(\mathbb{R},L^2\right)}=0,
\\
\lim_{n\to\infty}\sup_{\lambda\in{\mathfrak{K}}}\left\|\boldsymbol{{T}}_{n\mid\lambda}-
\boldsymbol{T}_{\mid\lambda}\right\|_{C\left(\mathbb{R}^2,\mathbb{C}\right)}=0
\end{gathered}
\end{equation}
for any choice of a compact subset $\mathfrak{K}$ of $\overset{\circ}\Pi(T)$ (compare with \eqref{eq3.3d}-\eqref{eq3.7d}).
\end{theorem}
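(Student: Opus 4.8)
The plan is to exploit the one feature absent from the setting of Theorem~\ref{thmeq3.1}: compactness of $T$ upgrades the merely strong convergence $T_n\to T$ to convergence in the operator norm. Since $T$ is compact, the image of the closed unit ball, $T(\{f:\|f\|\leqslant1\})$, is relatively compact in $L^2$, and $P_n\to I$ strongly; applying Lemma~\ref{lemKato} to this set gives
\begin{equation*}
\|T-T_n\|=\|(I-P_n)T\|=\sup_{\|f\|\leqslant1}\|(I-P_n)(Tf)\|\to0\quad\text{as $n\to\infty$.}
\end{equation*}
(The same reasoning, together with $\|T(I-P_n)\|=\|(I-P_n)T^*\|$ and the compactness of $T^*$, yields $\|T-\widetilde T_n\|\to0$ as well, though only the estimate for $T_n$ is needed below.)

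Next I would convert this norm convergence into uniform-on-compacta convergence of the resolvents. Fix a compact $\mathfrak{K}\subset\overset{\circ}\Pi(T)$ and set $C=\sup_{\lambda\in\mathfrak{K}}\|R_\lambda(T)\|$ and $L=\sup_{\lambda\in\mathfrak{K}}|\lambda|$, both finite because $\lambda\mapsto R_\lambda(T)$ is norm-continuous on $\Pi(T)$ (see Subsection~\ref{freres}) and $\mathfrak{K}$ is compact. For every $\lambda\in\mathfrak{K}$ the factorization $I-\lambda T_n=(I-\lambda T)\bigl(I-\lambda R_\lambda(T)(T_n-T)\bigr)$ holds, so once $n$ is large enough that $LC\|T-T_n\|<\tfrac12$, the second factor is invertible by a Neumann series uniformly in $\lambda\in\mathfrak{K}$; hence $\mathfrak{K}\subset\Pi(T_n)$ (so that the kernels $\boldsymbol{T}_{n\mid\lambda}$ do exist for $\lambda\in\mathfrak{K}$) and
\begin{equation*}
\sup_{\lambda\in\mathfrak{K}}\|R_\lambda(T_n)-R_\lambda(T)\|\leqslant\frac{LC^2\,\|T-T_n\|}{1-LC\,\|T-T_n\|}\to0,\qquad\sup_{\lambda\in\mathfrak{K}}\|R_\lambda(T_n)\|\leqslant C+1
\end{equation*}
for all large $n$; taking adjoints gives the same for $R^*_\lambda(T_n)-R^*_\lambda(T)$. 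This step rests entirely on the norm convergence and is the crux of the proof: it is what forces the \emph{double} uniformity (over $t$, $s$, or $(s,t)$ and over $\lambda\in\mathfrak{K}$ simultaneously) distinguishing \eqref{eq5.1} from the single-variable uniform limits \eqref{eq3.6d}, \eqref{eq3.8d}, \eqref{eq3.7d}.

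With these two facts the three limits follow by substituting into the representation formulae and estimating, much as in Theorem~\ref{thmeq3.1} but now with every bound uniform. For the first limit in \eqref{eq5.1} I would write, using \eqref{carf:resk} and \eqref{ke1.3.4sbkpr} with $\lambda_n(\lambda)=\lambda$, the splitting $\boldsymbol{t}_{n\mid\lambda}^{\boldsymbol{\prime}}(t)-\boldsymbol{t}^{\boldsymbol{\prime}}_{\mid\lambda}(t)=R_\lambda(T_n)(P_n-I)\boldsymbol{t}^{\boldsymbol{\prime}}(t)+(R_\lambda(T_n)-R_\lambda(T))\boldsymbol{t}^{\boldsymbol{\prime}}(t)$, whose norm is at most $(C+1)\|(P_n-I)\boldsymbol{t}^{\boldsymbol{\prime}}(t)\|+\|R_\lambda(T_n)-R_\lambda(T)\|\,\boldsymbol{\tau}^{\boldsymbol{\prime}}(t)$; the supremum over $t\in\mathbb{R}$ and $\lambda\in\mathfrak{K}$ tends to $0$ by \eqref{Katolem}, by the uniform resolvent estimate, and by \eqref{eqnormkf}. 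For the second limit, \eqref{carf:resk} and \eqref{ke1.3.4sbk} give $\boldsymbol{t}_{n\mid\lambda}(s)-\boldsymbol{t}_{\mid\lambda}(s)=\chi_n(s)(R^*_\lambda(T_n)-R^*_\lambda(T))\boldsymbol{t}(s)-\widehat{\chi}_n(s)\boldsymbol{t}_{\mid\lambda}(s)$; the first part is handled as above, while the tail obeys $\widehat{\chi}_n(s)\|\boldsymbol{t}_{\mid\lambda}(s)\|\leqslant C\,\widehat{\chi}_n(s)\boldsymbol{\tau}(s)\leqslant C\sup_{|s|\geqslant\tau_n}\boldsymbol{\tau}(s)\to0$ uniformly in $\lambda\in\mathfrak{K}$, since $\boldsymbol{\tau}$ vanishes at infinity by \eqref{eqnormkf}.

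Finally, for the kernel limit I would start from \eqref{ke1.3.6} and \eqref{ke1.3.6sbk} (again with $\lambda_n(\lambda)=\lambda$) and the Cauchy--Schwarz inequality to reach the pointwise bound
\begin{equation*}
|\boldsymbol{T}_{n\mid\lambda}(s,t)-\boldsymbol{T}_{\mid\lambda}(s,t)|\leqslant L\,\bigl\|\boldsymbol{t}_{n\mid\lambda}^{\boldsymbol{\prime}}(t)-\boldsymbol{t}^{\boldsymbol{\prime}}_{\mid\lambda}(t)\bigr\|\,\boldsymbol{\tau}(s)+L\,C\,\widehat{\chi}_n(s)\,\boldsymbol{\tau}^{\boldsymbol{\prime}}(t)\boldsymbol{\tau}(s)+|\boldsymbol{T}_n(s,t)-\boldsymbol{T}(s,t)|,
\end{equation*}
whose supremum over $(s,t)\in\mathbb{R}^2$ and $\lambda\in\mathfrak{K}$ goes to $0$ by the first limit already established (uniform in $\lambda$ on $\mathfrak{K}$), by the vanishing of $\boldsymbol{\tau}$ at infinity, and by \eqref{ke1.3.1}. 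The only genuine obstacle is the passage from strong to norm convergence and its uniform consequence for the resolvents; once that is secured, the remaining estimates are routine refinements of those appearing in the proof of Theorem~\ref{thmeq3.1}.
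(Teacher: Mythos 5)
Your proof is correct, and its overall skeleton coincides with the paper's: first upgrade $T_n\to T$ to operator-norm convergence using compactness, then deduce uniform-over-$\mathfrak{K}$ convergence of the resolvents, and finally feed this into the representation formulae \eqref{carf:resk}, \eqref{ke1.3.4sbk}, \eqref{ke1.3.4sbkpr}, \eqref{ke1.3.6}, \eqref{ke1.3.6sbk} — your three closing estimates are in substance the $\beta_n=0$ specializations of \eqref{eq3.27}--\eqref{eqGamb} that the paper itself writes down. The one genuine divergence is in the middle step. The paper obtains the inclusion $\mathfrak{K}\subset\Pi(T_n)$ and the uniform bound \eqref{eq5.3} by invoking Theorem~\ref{thmeq3.4} (whose hypothesis \eqref{eq3.9} is automatic once $\|T_n-T\|\to0$) together with Kato's Theorem~VIII-1.1 (via \eqref{eqd3.16}, \eqref{eq3.22}), and only then converts the resolvent differences into products by the second resolvent equation \eqref{secFResEq} to get \eqref{eq5.4}. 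You instead extract everything at once from the factorization $I-\lambda T_n=(I-\lambda T)\bigl(I-\lambda R_\lambda(T)(T_n-T)\bigr)$ and a Neumann series: this is self-contained, avoids the detour through $\nabla_{\mathfrak{b}}$ and the generalized strong convergence machinery, and even yields an explicit rate $O(\|T-T_n\|)$ for $\sup_{\lambda\in\mathfrak{K}}\|R_\lambda(T_n)-R_\lambda(T)\|$. What the paper's route buys in exchange is economy within its own framework — it reuses results already established for Theorem~\ref{thmeq3.1} and Theorem~\ref{thmeq3.4} rather than redoing a perturbation argument — but as a standalone proof yours is the more elementary and arguably the cleaner of the two.
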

\begin{proof}[\indent Proof]
Let $\mathfrak{K}$ be a compact subset of $\overset{\circ}\Pi(T)$.
Since under the stated hypotheses on $T$
\begin{equation}\label{eq5.2}
\lim_{n\to\infty}\left\|{T}_n-T\right\|=0\quad\text{and}\quad
\lim_{n\to\infty}\left\|\widetilde{T}_n-T\right\|=0,
\end{equation}
it follows
from \eqref{eqd3.16}, \eqref{eq3.22},
and Theorem~\ref{thmeq3.4} (all applied with $\beta_n$ all taken equal to zero)
that for some positive constant $M$
\begin{equation}\label{eq5.3}
\sup_{\lambda\in\mathfrak{K}}\left\|\widetilde{T}_{n\mid\lambda}\right\|
+\sup_{\lambda\in\mathfrak{K}}\left\|T_{n\mid\lambda}\right\|\leqslant M
\end{equation}
for all sufficiently large $n$.
Transforming the Fredholm resolvent differences $\widetilde{T}_{n\mid\lambda}-T_{\mid\lambda}$ 
and ${T}_{n\mid\lambda}-T_{\mid\lambda}$
into products of operators via the second resolvent equation \eqref{secFResEq} 
and subsequently using \eqref{eq5.2} and \eqref{eq5.3} then leads to the limit-relations:
\begin{equation}\label{eq5.4}
\lim_{n\to\infty}\sup_{\lambda\in\mathfrak{K}}
\left\|\widetilde{T}_{n\mid\lambda}-T_{\mid\lambda}\right\|=0,\quad
\lim_{n\to\infty}\sup_{\lambda\in\mathfrak{K}}
\left\|{T}_{n\mid\lambda}-T_{\mid\lambda}\right\|=0.
\end{equation}
For $\beta_n=0$, proceeding the inequalities \eqref{eq3.27}-\eqref{eqGamb}
yields, respectively, the following estimates
\begin{multline*}
\sup_{\lambda\in{\mathfrak{K}}}\left\|\boldsymbol{{t}}_{n\mid\lambda}^{\boldsymbol{\prime}}-
\boldsymbol{t}^{\boldsymbol{\prime}}_{\mid\lambda}\right\|_{C\left(\mathbb{R},L^2\right)}
\leqslant
\left\|(P_n-I)\left(\boldsymbol{t}^{\boldsymbol{\prime}}(t)\right)\right\|_{C\left(\mathbb{R},L^2\right)}
\\
+
\|\boldsymbol{\tau}^{\boldsymbol{\prime}}\|_{C(\mathbb{R},\mathbb{R})}
\sup_{\lambda\in{\mathfrak{K}}}
\left(|\lambda|\left\|\widetilde{T}_{n\mid\lambda}-T_{\mid\lambda}\right\|\right),
\\
\sup_{\lambda\in{\mathfrak{K}}}\left\|\boldsymbol{t}_{n\mid\lambda}-
\boldsymbol{t}_{\mid\lambda}\right\|_{C\left(\mathbb{R},L^2\right)}
\leqslant
\|\widehat{\chi}_n\boldsymbol{\tau}\|_{C(\mathbb{R},\mathbb{R})}
\sup_{\lambda\in{\mathfrak{K}}}\left(1+|\lambda|\left\|T_{\mid\lambda}\right\|\right)
\\
+
\|\boldsymbol{\tau}\|_{C(\mathbb{R},\mathbb{R})}
\sup_{\lambda\in{\mathfrak{K}}}\left(|\lambda|\left\|
{T}_{n\mid\lambda}-T_{\mid\lambda}\right\|\right),
\\
\sup_{\lambda\in{\mathfrak{K}}}\left\|\boldsymbol{{T}}_{n\mid\lambda}-
\boldsymbol{T}_{\mid\lambda}\right\|_{C\left(\mathbb{R}^2,\mathbb{C}\right)}
\leqslant
\|\boldsymbol{\tau}\|_{C(\mathbb{R},\mathbb{R})}
\sup_{\lambda\in{\mathfrak{K}}}\left\|\boldsymbol{{t}}_{n\mid\lambda}^{\boldsymbol{\prime}}-
\boldsymbol{t}^{\boldsymbol{\prime}}_{\mid\lambda}\right\|_{C\left(\mathbb{R},L^2\right)}
\\
+\|\widehat{\chi}_n\boldsymbol{\tau}\|_{C(\mathbb{R},\mathbb{R})}
\|\boldsymbol{\tau}^{\boldsymbol{\prime}}\|_{C(\mathbb{R},\mathbb{R})}
\sup_{\lambda\in{\mathfrak{K}}}\left(|\lambda|\left\|R_\lambda(T)\right\|\right)
+
\|\boldsymbol{{T}}_n-\boldsymbol{T}\|_{C\left(\mathbb{R}^2,\mathbb{C}\right)},
\end{multline*}
whence the limits in \eqref{eq5.1} all hold by virtue of \eqref{eqnormkf},
\eqref{Katolem}, \eqref{ke1.3.1}, and \eqref{eq5.4}.
The theorem is proved.
\end{proof}

\end{document}